\newcommand{\email}[1]{\href{mailto:#1}{\nolinkurl{#1}}}
\newlength{\mySubFigSize}
\definecolor{labelkey}{rgb}{0,0.08,0.45}
\definecolor{refkey}{rgb}{0,0.6,0.0}
\definecolor{Brown}{rgb}{0.45,0.0,0.05}
\definecolor{dgreen}{rgb}{0.00,0.49,0.00}
\definecolor{dblue}{rgb}{0,0.08,0.75}
\definecolor{lblue}{rgb}{0,0.7,0.75}
\renewcommand{\leq}{\ensuremath{\leqslant}}
\renewcommand{\geq}{\ensuremath{\geqslant}}
\newcommand{\minimize}[2]{\ensuremath{\underset{\substack{{#1}}}%
{\text{\rm minimize}}\;\;#2 }}
\newcommand{\Scal}[2]{\bigg\langle{#1}\;\bigg|\:{#2}\bigg\rangle} 
\newcommand{\scal}[2]{{\langle{{#1}\mid{#2}}\rangle}}
\newcommand{\lai}[2]{{\ell_{#1}({#2)}}}
\newcommand{\yosi}[2]{\ensuremath{\sideset{^{#2}}{}{\operatorname{}}\!\!#1}}
\newcommand{\menge}[2]{\big\{{#1}~\big |~{#2}\big\}}
\newcommand{\HH}{\ensuremath{{\mathcal H}}}
\newcommand{\GG}{\ensuremath{{\mathcal G}}}
\newcommand{\KK}{\ensuremath{{\mathbb K}}}
\newcommand{\Sum}{\ensuremath{\displaystyle\sum}}
\newcommand{\emp}{\ensuremath{{\varnothing}}}
\newcommand{\Id}{\ensuremath{\operatorname{Id}}}
\newcommand{\RR}{\ensuremath{\mathbb{R}}}
\newcommand{\RP}{\ensuremath{\left[0,+\infty\right[}}
\newcommand{\RPP}{\ensuremath{\left]0,+\infty\right[}}
\newcommand{\RPX}{\ensuremath{\left[0,+\infty\right]}}
\newcommand{\RX}{\ensuremath{\left]-\infty,+\infty\right]}}
\newcommand{\NN}{\ensuremath{\mathbb N}}
\newcommand{\weakly}{\ensuremath{\:\rightharpoonup\:}}
\newcommand{\exi}{\ensuremath{\exists\,}}
\newcommand{\ran}{\ensuremath{\text{\rm ran}\,}}
\newcommand{\zer}{\ensuremath{\text{\rm zer}\,}}
\newcommand{\pinf}{\ensuremath{{+\infty}}}
\newcommand{\dom}{\ensuremath{\text{\rm dom}\,}}
\newcommand{\prox}{\ensuremath{\text{\rm prox}}}
\newcommand{\diam}{\ensuremath{\text{\rm diam}}}
\newcommand{\proj}{\ensuremath{\text{\rm proj}}}
\newcommand{\Fix}{\ensuremath{\text{\rm Fix}\,}}
\newcommand{\gra}{\ensuremath{\text{\rm gra}\,}}
\newcommand{\infconv}{\ensuremath{\mbox{\small$\,\square\,$}}}
\newcommand{\zeroun}{\ensuremath{\left]0,1\right[}}
\newcommand{\rzeroun}{\ensuremath{\left]0,1\right]}}
\newtheorem{theorem}{Theorem}[section]
\newtheorem{lemma}[theorem]{Lemma}
\newtheorem{corollary}[theorem]{Corollary}
\newtheorem{proposition}[theorem]{Proposition}
\newtheorem{assumption}[theorem]{Assumption}
\theoremstyle{plain}{\theorembodyfont{\rmfamily}%
\newtheorem{example}[theorem]{Example}}
\theoremstyle{plain}{\theorembodyfont{\rmfamily}%
\newtheorem{remark}[theorem]{Remark}}
\theoremstyle{plain}{\theorembodyfont{\rmfamily}%
}
\theoremstyle{plain}{\theorembodyfont{\rmfamily}%
}
\theoremstyle{plain}{\theorembodyfont{\rmfamily}%
}
\theoremstyle{plain}{\theorembodyfont{\rmfamily}%
\newtheorem{definition}[theorem]{Definition}}
\theoremstyle{plain}{\theorembodyfont{\rmfamily}%
\newtheorem{problem}[theorem]{Problem}}
\numberwithin{equation}{section}
\begin{document}

\title{\sffamily\huge\vskip -15mm 
Solving Composite Fixed Point Problems with\\
Block Updates\thanks{Contact 
author: P. L. Combettes, {\email{plc@math.ncsu.edu}},
phone:+1 (919) 515 2671. The work of P. L. Combettes was 
supported by the National Science Foundation under grant 
DMS-1818946 and that of L. E. Glaudin by ANR-3IA Artificial and 
Natural Intelligence Toulouse Institute.\\
\indent
{\bfseries ~2010 Mathematics Subject Classification:}
47J26, 47N10, 90C25, 47H05.}}

\author{Patrick L. Combettes$^1$ and Lilian E. Glaudin$^2$\\
\small $\!^1$North Carolina State University,
Department of Mathematics,
Raleigh, NC 27695-8205, USA\\
\small \email{plc@math.ncsu.edu}\\
\small \medskip
\small $\!^2$Universit\'e Toulouse Capitole, UMR TSE-R, 
31080 Toulouse, France\\
\small \email{lilian@glaudin.net}\\
}

\date{~}
\maketitle

\begin{abstract} 
\noindent
Various strategies are available to construct iteratively a common
fixed point of nonexpansive operators by activating only a block of
operators at each iteration. In the more challenging class of
composite fixed point problems involving operators that do not
share common fixed points, current methods require the activation
of all the operators at each iteration, and the question of
maintaining convergence while updating only blocks of operators is
open. We propose a method that achieves this goal and analyze its
asymptotic behavior. Weak, strong, and linear convergence results
are established by exploiting a connection with the theory of
concentrating arrays. Applications to several nonlinear and
nonsmooth analysis problems are presented, ranging from monotone
inclusions and inconsistent feasibility problems, to variational
inequalities and minimization problems arising in data science.
\end{abstract} 

{\bfseries Key words.} 
averaged operator;
constrained minimization;
forward-backward splitting;
fixed point iterations;
monotone operator;
nonexpansive operator;
variational inequality.
\bigskip

\newpage
\section{Introduction}

Throughout, $\HH$ is a real Hilbert space with power set $2^{\HH}$,
identity operator $\Id$, scalar product $\scal{\cdot}{\cdot}$, and
associated norm $\|\cdot\|$. Recall that an operator
$T\colon\HH\to\HH$ is nonexpansive if it is $1$-Lipschitzian, and
$\alpha$-averaged for some $\alpha\in\zeroun$ if
$\Id+\alpha^{-1}(T-\Id)$ is nonexpansive \cite{Bail78}.  We
consider the broad class of nonlinear analysis problems which can
be cast in the following format.

\begin{problem}
\label{prob:1}
Let $m$ be a strictly positive integer and let
$(\omega_i)_{1\leq i\leq m}\in\rzeroun^m$ be such that
$\sum_{i=1}^m\omega_i=1$. For every
$i\in\{0,\ldots,m\}$, let $T_i\colon\HH\to\HH$ be
$\alpha_i$-averaged for some $\alpha_i\in\zeroun$.
The task is to find a fixed point of
$T_0\circ\sum_{i=1}^m\omega_iT_i$.
\end{problem}

A classical instantiation of Problem~\ref{prob:1} is found in the 
area of best approximation \cite{Che59a,Vonn49}: given two
nonempty closed convex subsets $C$ and $D$ of $\HH$, with 
projection operators $\proj_C$ and $\proj_D$, find a fixed point 
of the composition $\proj_C\circ\proj_D$. 
Geometrically, such points are those in $C$ at minimum distance 
from $D$, and they can be constructed via the method of alternating
projections \cite{Che59a,Gubi67}
\begin{equation}
\label{e:cheney59}
(\forall n\in\NN)\quad x_{n+1}=\proj_C(\proj_Dx_n).
\end{equation}
This problem was extended in \cite{Acke80} to that of finding a
fixed point of the composition $\prox_f\circ\prox_g$ of the
proximity operators of proper lower semicontinuous convex functions
$f\colon\HH\to\RX$ and $g\colon\HH\to\RX$. Recall that, given
$x\in\HH$, $\prox_fx$ is the unique minimizer of the function
$y\mapsto f(y)+\|x-y\|^2/2$ or, equivalently,
$\prox_fx=(\Id+\partial f)^{-1}$ where $\partial f$ is the
subdifferential of $f$, which is maximally monotone \cite{Livre1}.
A further generalization of this formalism was proposed in
\cite{Nonl05} where, given two maximally monotone operators
$A\colon\HH\to 2^{\HH}$ and $B\colon\HH\to 2^{\HH}$, with
associated resolvents $J_A=(\Id+A)^{-1}$ and $J_B=(\Id+B)^{-1}$,
the asymptotic behavior of the
iterations 
\begin{equation}
\label{e:nonl05}
(\forall n\in\NN)\quad x_{n+1}=J_A(J_Bx_n)
\end{equation}
for constructing a fixed point of $J_A\circ J_B$ was investigated.
We recall that $J_A$ and $J_B$ are $1/2$-averaged operators
\cite{Livre1}. Now let $A_0$ and $(B_i)_{1\leq i\leq m}$ be
maximally monotone operators from $\HH$ to $2^{\HH}$ and, for every
$i\in\{1,\ldots,m\}$, let
$\yosi{B_i}{\rho_i}=(\Id-J_{\rho_iB_i})/\rho_i$ be the Yosida
approximation of $B_i$ of index $\rho_i\in\RPP$. Set
$\beta=1/(\sum_{i=1}^m{1}/{\rho_i})$ and
$(\forall i\in\{1,\ldots,m\})$ $\omega_i={\beta}/{\rho_i}$.
In connection with the inclusion problem 
\begin{equation}
\label{e:11}
\text{find}\;\;x\in\HH\quad\text{such that}\quad
0\in A_0x+\sum_{i=1}^m\big(\yosi{B_i}{\rho_i}\big)x,
\end{equation}
the iterative process 
\begin{equation}
\label{e:main4}
(\forall n\in\NN)\quad
x_{n+1}=J_{\beta\gamma_n A_0}\Bigg(x_n+\displaystyle{\gamma_n
\Bigg(\sum_{i=1}^m\omega_i}J_{\rho_iB_i}x_n-x_n\Bigg)\Bigg),
\quad\text{where}\quad 0<\gamma_n<2,
\end{equation}
was studied in \cite{Opti04}. This algorithm captures 
\eqref{e:nonl05} as well as methods such as those proposed
in \cite{Lehd99,Leve09}; see also \cite{Wang11} for 
related problems. To make its structure more apparent, let us set
\begin{equation}
(\forall n\in\NN)\quad
T_{0,n}=J_{\beta\gamma_n A_0}
\quad\text{and}\quad 
(\forall i\in\{1,\ldots,m\})\quad 
T_{i,n}=(1-\gamma_n)\Id+\gamma_nJ_{\rho_iB_i}.
\end{equation}
Then we observe that, for every $n\in\NN$, the following hold:
\begin{itemize}
\setlength{\itemsep}{0pt}
\item
Problem \eqref{e:11} is the special case of Problem~\ref{prob:1} in
which $T_0=J_{\beta A_0}$, $T_1=J_{\rho_1B_1}$, \ldots, and
$T_m=J_{\rho_mB_m}$. Its set of solutions is 
\begin{equation}
\Fix\Bigg(J_{\beta A_0}\circ\sum_{i=1}^m\omega_iJ_{\rho_iB_i}\Bigg)=
\Fix\Bigg(T_{0,n}\circ\sum_{i=1}^m\omega_iT_{i,n}\Bigg).
\end{equation}
\item
For every $i\in\{0,\ldots,m\}$, $T_{i,n}$
is an averaged nonexpansive operator.
\item
The updating rule in \eqref{e:main4} can be written as 
\begin{equation}
\label{e:main5}
x_{n+1}=T_{0,n}\Bigg(\sum_{i=1}^m\omega_it_{i,n}\Bigg),
\quad\text{where}\quad(\forall i\in\{1,\ldots,m\})
\quad t_{i,n}=T_{i,n}x_n.
\end{equation}
\end{itemize}
The implementation of \eqref{e:main5} requires the activation of 
$T_{0,n}$ and 
the $m$ operators $(T_{i,n})_{1\leq i\leq m}$. If the operators
$(T_i)_{0\leq i\leq m}$ have common fixed points, then
Problem~\ref{prob:1} amounts to finding such a point, and this can
be achieved via block-iterative methods that 
require activating only subgroups of operators over the 
iterations; see, for instance, \cite{Aley08,Baus96,Else01,Flam95}.
In the absence of common fixed points, whether 
Problem~\ref{prob:1} can be solved by updating only subgroups of
operators is an open question. In the present paper, we address it
by showing that it is possible to lighten the
computational burden of iteration $n$ of \eqref{e:main5} by
activating only a subgroup 
$(T_{i,n})_{i\in I_n\subset\{1,\ldots,m\}}$ of 
the operators and by recycling older evaluations of the 
remaining operators. This leads to the iteration template
\begin{equation}
\label{e:1}
\begin{array}{l}
\left\lfloor
\begin{array}{l}
\text{for every}\;i\in I_n\\
\left\lfloor
\begin{array}{l}
t_{i,n}=T_{i,n}x_n
\end{array}
\right.\\
\text{for every}\;i\in\{1,\ldots,m\}\smallsetminus I_n\\
\left\lfloor
\begin{array}{l}
t_{i,n}=t_{i,n-1}\\
\end{array}
\right.\\
x_{n+1}=T_{0,n}\Bigg(\Sum_{i=1}^m\omega_i t_{i,n}\Bigg).
\end{array}
\right.\\
\end{array}
\end{equation}
The proposed framework will feature a flexible deterministic rule
for selecting the blocks of indices $(I_n)_{n\in\NN}$, as well as
tolerances in the evaluation of the operators in \eqref{e:1}.
Somewhat unexpectedly, our analysis will rely on the theory of
concentrating arrays, which appears predominantly in the area of
mean iteration methods
\cite{Siop17,Jmaa02,Kugl03,Mann53,Mann79,Reic75,Sadd10}. In
Section~\ref{sec:2}, we propose a new type of concentrating array
that will be employed in Section~\ref{sec:3} to investigate the
asymptotic behavior of the method. 
Finally, various applications to nonlinear analysis problems are
presented in Section~\ref{sec:4}.

\noindent
{\bfseries Notation.}
Let $M\colon\HH\to 2^{\HH}$. Then
$\gra M=\menge{(x,u)\in\HH\times\HH}{u\in Mx}$ is the graph
of $M$, 
$\zer M=\menge{x\in\HH}{0\in Mx}$ the set of zeros of $M$, 
$\dom M=\menge{x\in\HH}{Mx\neq\emp}$ the domain of $M$, 
$\ran M=\menge{u\in\HH}{(\exi x\in\HH)\;u\in Mx}$ the range of $M$,
$M^{-1}$ the inverse of $M$, which has graph
$\menge{(u,x)\in\HH\times\HH}{u\in Mx}$,
and $J_M=(\Id+M)^{-1}$ the resolvent of $M$. 
The parallel sum of $M$ and $A\colon\HH\to 2^{\HH}$ is
$M\infconv A=(M^{-1}+A^{-1})^{-1}$.
Further, $M$ is monotone if
\begin{equation}
\big(\forall (x,u)\in\gra M\big)
\big(\forall (y,v)\in\gra M\big)\quad\scal{x-y}{u-v}\geq 0,
\end{equation}
and maximally monotone if, in addition, there exists no
monotone operator $A\colon\HH\to 2^{\HH}$ such that
$\gra M\subset\gra A\neq\gra M$. If $M-\rho\Id$ is monotone for
some $\rho\in\RPP$, then $M$ is strongly monotone.
We denote by $\Gamma_0(\HH)$ the class of lower semicontinuous
convex functions $f\colon\HH\to\RX$ such that 
$\dom f=\menge{x\in\HH}{f(x)<\pinf}\neq\emp$.
Let $f\in\Gamma_0(\HH)$. The subdifferential of $f$
is the maximally monotone operator
$\partial f\colon\HH\to 2^{\HH}\colon
x\mapsto\menge{u\in\HH}{(\forall y\in\HH)\;
\scal{y-x}{u}+f(x)\leq f(y)}$.
For every $x\in\HH$, the unique minimizer of
the function $f+(1/2)\|\cdot-x\|^2$ is denoted by $\prox_fx$.
We have $\prox_f=J_{\partial f}$. Let $C$ be a nonempty closed
convex subset of $\HH$. Then $\proj_C$ is the projector onto $C$,
$d_C$ the distance function to $C$, and $\iota_C$ is the indicator
function of $C$, which takes the value $0$ on $C$ and $\pinf$ on
its complement.

\section{Concentrating arrays}
\label{sec:2}

Mann's mean value iteration method seeks a fixed point of an
operator $T\colon\HH\to\HH$ via the iterative process 
$x_{n+1}=T\overline{x}_n$, where $\overline{x}_n$ is a convex 
combination of the points $(x_j)_{0\leq j\leq n}$
\cite{Mann53,Mann79}.
The notion of a concentrating array was introduced in \cite{Jmaa02}
to study the asymptotic behavior of such methods. Interestingly, it
will turn out to be also quite useful in our investigation of the
asymptotic behavior of \eqref{e:1}.

\begin{definition}{\rm\cite[Definition~2.1]{Jmaa02}}
\label{d:con}
A triangular array $(\mu_{n,j})_{n\in\NN,0\leq j\leq n}$ in $\RP$
is concentrating if the following hold:
\begin{enumerate}[label={\rm[\alph*]}]
\setlength{\itemsep}{0pt}
\item 
\label{d:coni}
$(\forall n\in\NN)$ $\sum_{j=0}^n\mu_{n,j}=1$.
\item 
\label{d:conii}
$(\forall j\in\NN)$ $\lim_{n\to\pinf}\mu_{n,j}=0$.
\item
\label{d:coniii}
Every sequence $(\xi_n)_{n\in\NN}$ in $\RP$ that satisfies
\begin{equation}
\label{e:12}
\big(\forall n\in\NN\big)\quad
\xi_{n+1}\leq\Sum_{j=0}^n\mu_{n,j}\xi_j+\varepsilon_n,
\end{equation}
for some summable sequence $(\varepsilon_n)_{n\in\NN}$ in $\RP$,
converges.
\end{enumerate}
\end{definition}

We shall require the following convergence principle, which
extends that of quasi-Fej\'er monotonicity \cite{Else01}.

\begin{lemma}
\label{l:1}
Let $C$ be a nonempty subset of $\HH$, let $\phi\colon\RP\to\RP$ be
strictly increasing and such that $\lim_{t\to\pinf}\phi(t)=\pinf$,
let $(x_n)_{n\in\NN}$ be a sequence in $\HH$, let
$(\mu_{n,j})_{n\in\NN,0\leq j\leq n}$ be a concentrating array in
$\RP$, let $(\beta)_{n\in\NN}$ be a sequence in $\RP$, and let 
$(\varepsilon_n)_{n\in\NN}$ be a summable sequence in $\RP$ such
that 
\begin{equation}
\label{e:x0}
(\forall x\in C)(\forall n\in\NN)\quad
\phi(\|x_{n+1}-x\|)\leq\Sum_{j=0}^{n}\mu_{n,j}\phi(\|x_j-x\|)
-\beta_n+\varepsilon_n.
\end{equation}
Then the following hold:
\begin{enumerate}
\setlength{\itemsep}{0pt}
\item
\label{l:1i}
$(x_n)_{n\in\NN}$ is bounded.
\item
\label{l:1ii}
$\beta_n\to 0$.
\item
\label{l:1iii}
Suppose that every weak sequential cluster point of 
$(x_n)_{n\in\NN}$ belongs to $C$. Then $(x_n)_{n\in\NN}$ converges
weakly to a point in $C$.
\item
\label{l:1iv}
Suppose that $(x_n)_{n\in\NN}$ has a strong sequential cluster
point in $C$. Then $(x_n)_{n\in\NN}$ converges strongly to a 
point in $C$.
\end{enumerate}
\end{lemma}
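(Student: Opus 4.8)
The plan is to fix an arbitrary $x\in C$, set $\xi_n=\phi(\|x_n-x\|)\in\RP$, and exploit \eqref{e:x0} in two complementary ways. Discarding the nonnegative term $\beta_n$ turns \eqref{e:x0} into $\xi_{n+1}\leq\sum_{j=0}^n\mu_{n,j}\xi_j+\varepsilon_n$, which is precisely \eqref{e:12}. Since $(\mu_{n,j})_{n\in\NN,0\leq j\leq n}$ is concentrating, Definition~\ref{d:con}\ref{d:coniii} guarantees that $(\xi_n)_{n\in\NN}$ converges, say to some $\ell\in\RP$. To obtain \ref{l:1i}, I would note that $(\xi_n)_{n\in\NN}$ is then bounded, say by $M\in\RP$; as $\phi$ is strictly increasing with $\phi(t)\to\pinf$, there exists $t_0\in\RP$ with $\phi(t_0)>M$, and strict monotonicity forces $\|x_n-x\|\leq t_0$ for every $n\in\NN$, so $(x_n)_{n\in\NN}$ is bounded.

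The decisive preliminary step is to upgrade this boundedness to the convergence of $(\|x_n-x\|)_{n\in\NN}$ for every $x\in C$, and here the only subtlety is that $\phi$ is not assumed continuous. I would argue by contradiction: if $a<b$ were two distinct cluster points of $(\|x_n-x\|)_{n\in\NN}$, then for any $c\in\left]a,b\right[$, applying strict monotonicity along a subsequence with $\|x_{n_k}-x\|\to a$ gives $\ell\leq\phi(c)$, while a subsequence with $\|x_{n_k}-x\|\to b$ gives $\ell\geq\phi(c)$; hence $\phi(c)=\ell$ for every $c\in\left]a,b\right[$, contradicting the strict increase of $\phi$. Therefore $(\|x_n-x\|)_{n\in\NN}$ has at most one cluster point and, being bounded, converges.

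For \ref{l:1ii} I would use the reverse estimate drawn from \eqref{e:x0}, namely $0\leq\beta_n\leq\sum_{j=0}^n\mu_{n,j}\xi_j-\xi_{n+1}+\varepsilon_n$. The key auxiliary fact is that the averaging induced by a concentrating array is a regular (Silverman--Toeplitz) summability method: conditions \ref{d:coni} and \ref{d:conii}, together with nonnegativity of the entries, are exactly the hypotheses ensuring that $\xi_j\to\ell$ implies $\sum_{j=0}^n\mu_{n,j}\xi_j\to\ell$. I would prove this inline by writing $\sum_{j=0}^n\mu_{n,j}\xi_j-\ell=\sum_{j=0}^n\mu_{n,j}(\xi_j-\ell)$ via \ref{d:coni}, then splitting at a threshold $N$ beyond which $|\xi_j-\ell|$ is small, the finite head over $j\leq N$ vanishing as $n\to\pinf$ by \ref{d:conii}. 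Consequently the right-hand side of the estimate tends to $\ell-\ell+0=0$, and since $\beta_n\geq 0$ we conclude $\beta_n\to 0$.

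Finally, \ref{l:1iii} and \ref{l:1iv} follow from the convergence of $(\|x_n-x\|)_{n\in\NN}$ by classical arguments. For \ref{l:1iii}, boundedness yields weak sequential cluster points, all lying in $C$ by hypothesis; if $x$ and $y$ are two of them, then $\|x_n-x\|$ and $\|x_n-y\|$ both converge, so the identity $\|x_n-x\|^2-\|x_n-y\|^2=-2\scal{x_n}{x-y}+\|x\|^2-\|y\|^2$ shows that $\scal{x_n}{x-y}$ converges; evaluating its limit along subsequences converging weakly to $x$ and to $y$ gives $\scal{x-y}{x-y}=0$, hence $x=y$, so Opial's criterion delivers weak convergence to a point in $C$. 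For \ref{l:1iv}, a strong cluster point $x\in C$ furnishes a subsequence with $\|x_{n_k}-x\|\to 0$; since $(\|x_n-x\|)_{n\in\NN}$ converges, its limit must be $0$, whence $x_n\to x$ strongly. I expect the verification that $(\|x_n-x\|)_{n\in\NN}$ genuinely converges (the argument accommodating a possibly discontinuous $\phi$) to be the main obstacle, the remaining steps being standard.
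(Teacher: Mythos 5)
Your proposal is correct and follows essentially the same route as the paper's proof: deduce convergence of $\phi(\|x_n-x\|)$ from Definition~\ref{d:con}\ref{d:coniii}, upgrade it to convergence of $\|x_n-x\|$ by ruling out two distinct cluster points via the strict monotonicity of $\phi$, obtain \ref{l:1ii} from the regularity (Toeplitz) property of the array, and conclude \ref{l:1iii}--\ref{l:1iv} by the standard quasi-Fej\'er arguments. The only differences are cosmetic: you prove the Toeplitz regularity and the Opial-type uniqueness of the weak limit inline, where the paper cites \cite[Theorem~3.5.4]{Knop56} and \cite[Lemma~2.47]{Livre1}.
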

\begin{proof}
Let $x\in C$. Let us first show that 
\begin{equation}
\label{e:x10}
(\|x_n-x\|)_{n\in\NN}\;\text{converges.}
\end{equation}
It follows from \eqref{e:x0} and Definition~\ref{d:con} that 
$(\phi(\|x_n-x\|))_{n\in\NN}$ converges, say 
$\phi(\|x_n-x\|)\to\lambda$. However, since 
$\lim_{t\to\pinf}\phi(t)=\pinf$, $(\|x_n-x\|)_{n\in\NN}$ is bounded 
and, to establish \eqref{e:x10}, it suffices to show that it does
not have two distinct cluster points. Suppose to the contrary that
there exist subsequences $(\|x_{k_n}-x\|)_{n\in\NN}$ and 
$(\|x_{l_n}-x\|)_{n\in\NN}$ such that $\|x_{k_n}-x\|\to\eta$ and 
$\|x_{l_n}-x\|\to\zeta>\eta$, and fix 
$\varepsilon\in\left]0,(\zeta-\eta)/2\right[$. Then, for $n$
sufficiently large, $\|x_{k_n}-x\|\leq\eta+\varepsilon<
\zeta-\varepsilon\leq\|x_{l_n}-x\|$ and, since $\phi$ is strictly
increasing, $\phi(\|x_{k_n}-x\|)\leq\phi(\eta+\varepsilon)<
\phi(\zeta-\varepsilon)\leq\phi(\|x_{l_n}-x\|)$. Taking the limit as
$n\to\pinf$ yields $\lambda\leq\phi(\eta+\varepsilon)<
\phi(\zeta-\varepsilon)\leq\lambda$, which is impossible.

\ref{l:1i} and \ref{l:1iv}: Clear in view of \eqref{e:x10}.

\ref{l:1ii}: 
As shown above, there exists $\lambda\in\RP$ such that 
$\phi(\|x_n-x\|)\to\lambda$. In turn, \cite[Theorem~3.5.4]{Knop56} 
implies that
$\sum_{j=0}^n\mu_{n,j}\phi(\|x_j-x\|)\to\lambda$. We thus derive
from \eqref{e:x0} that 
$0\leq\beta_n\leq\sum_{j=0}^n\mu_{n,j}\phi(\|x_j-x\|)-
\phi(\|x_{n+1}-x\|)+\varepsilon_n\to 0$.

\ref{l:1iii}: 
This follows from \eqref{e:x10} and \cite[Lemma~2.47]{Livre1}.
\end{proof}

Several examples of concentrating arrays are provided in
\cite{Jmaa02}. Here is a novel construction which is not only of
interest to mean iteration processes in fixed point theory 
\cite{Siop17,Jmaa02,Kugl03,Mann53,Mann79,Sadd10} but
will also play a pivotal role in establishing our main result, 
Theorem~\ref{t:1}. 

\begin{proposition}
\label{p:conc}
Let $K$ be a strictly positive integer and let 
$(\mu_{n,j})_{n\in\NN,0\leq j\leq n}$ be a
triangular array in $\RP$ such that the following hold:
\begin{enumerate}
\setlength{\itemsep}{0pt}
\item 
\label{p:conci}
$(\forall n\in\NN)$ $\sum_{j=0}^n\mu_{n,j}=1$.
\item 
\label{p:concii}
$(\forall n\in\NN)(\forall j\in\NN)$ $n-j\geq K$ $\Rightarrow$
$\mu_{n,j}=0$.
\item
\label{p:conciii}
$\inf_{n\in\NN}\mu_{n,n}>0$.
\end{enumerate}
Then $(\mu_{n,j})_{n\in\NN,0\leq j\leq n}$ is a concentrating
array.
\end{proposition}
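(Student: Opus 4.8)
The plan is to verify the three defining properties of a concentrating array in Definition~\ref{d:con}. Property~\ref{d:coni} is precisely hypothesis~\ref{p:conci}. For property~\ref{d:conii}, fix $j\in\NN$; by~\ref{p:concii} the condition $n-j\geq K$ forces $\mu_{n,j}=0$, so $\mu_{n,j}=0$ for every $n\geq j+K$ and hence $\mu_{n,j}\to 0$. The entire difficulty lies in property~\ref{d:coniii}: given a sequence $(\xi_n)_{n\in\NN}$ in $\RP$ satisfying \eqref{e:12} for some summable $(\varepsilon_n)_{n\in\NN}$ in $\RP$, one must show that $(\xi_n)_{n\in\NN}$ converges. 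The first observation is that, by~\ref{p:concii}, the sum in \eqref{e:12} involves only the $K$ most recent terms, so that $\xi_{n+1}\leq\sum_{j=\max\{0,n-K+1\}}^{n}\mu_{n,j}\xi_j+\varepsilon_n$, a convex combination (via~\ref{p:conci}) of the last $K$ values perturbed by a summable error, in which the most recent coefficient $\mu_{n,n}$ is bounded below by $\delta:=\inf_{n\in\NN}\mu_{n,n}>0$ thanks to~\ref{p:conciii}.

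The key device I would introduce is the sliding window maximum $M_n:=\max_{\max\{0,n-K+1\}\leq j\leq n}\xi_j$. Since the $\mu_{n,j}$ sum to $1$ and each $\xi_j$ in the window is at most $M_n$, the recursion gives $\xi_{n+1}\leq M_n+\varepsilon_n$, and therefore $M_{n+1}\leq M_n+\varepsilon_n$. Consequently $n\mapsto M_n-\sum_{k=0}^{n-1}\varepsilon_k$ is nonincreasing and bounded below by $-\sum_{k\in\NN}\varepsilon_k$, so $(M_n)_{n\in\NN}$ converges; this already yields boundedness of $(\xi_n)_{n\in\NN}$. Moreover a routine estimate shows that $\limsup_{n}M_n=\limsup_{n}\xi_n$, so that $M_n\to b$, where $b:=\limsup_{n}\xi_n$. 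It then remains only to prove that $d_n:=M_n-\xi_n\to 0$, since in that case $\xi_n=M_n-d_n\to b$ and we are done.

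To control $d_n$ I would isolate the diagonal term: bounding the older window entries by $M_n$ and using $\mu_{n,n}\geq\delta$ yields $\xi_{n+1}\leq M_n-\delta d_n+\varepsilon_n$, whence $d_{n+1}\geq\delta d_n-(|M_{n+1}-M_n|+\varepsilon_n)$. Here lies the main obstacle: a single step of this estimate is too weak, since it is consistent with $\liminf_{n}\xi_n<b$, and one must instead exploit the finite memory $K$. The mechanism is that a dip cannot persist. Since $M_{n+1}-M_n\to 0$ and $\varepsilon_n\to 0$, if $d_{n_0}\geq c>0$ at some sufficiently large $n_0$, then the inequality $d_{n+1}\geq\delta d_n-o(1)$ propagates the dip geometrically, giving $d_{n_0+k}\geq\tfrac12\delta^{K-1}c$ for all $k\in\{0,\ldots,K-1\}$. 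Thus $\xi_j\leq M_j-\tfrac12\delta^{K-1}c$ for $K$ consecutive indices $j$, which forces the window maximum $M_{n_0+K-1}$ strictly below $b$ and contradicts $M_n\to b$ (this uses $\delta<1$; the case $\delta=1$ reduces \eqref{e:12} to $\xi_{n+1}\leq\xi_n+\varepsilon_n$, which converges directly). Hence $d_n\to 0$, the sequence $(\xi_n)_{n\in\NN}$ converges, property~\ref{d:coniii} holds, and the array is concentrating.
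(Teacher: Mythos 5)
Your proposal is correct and follows essentially the same route as the paper's proof: both hinge on the sliding-window maximum over the last $K$ terms (your $M_n$ is the paper's $\widehat{\xi}_n$), its quasi-monotonicity $M_{n+1}\leq M_n+\varepsilon_n$ and hence convergence, and then a contradiction obtained by propagating a dip through the bound $\xi_{n+1}\leq\mu_{n,n}\xi_n+(1-\mu_{n,n})M_n+\varepsilon_n$ for $K$ consecutive steps until the window maximum is forced below its limit. Your repackaging via $d_n=M_n-\xi_n\to 0$ and the up-front identification of the limit as $\limsup_n\xi_n$ (which disposes of the ``bump above'' case that the paper rules out separately) is a mild streamlining, not a different argument.
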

\begin{proof}
Properties \ref{d:coni} and \ref{d:conii} in
Definition~\ref{d:con} clearly hold. To verify \ref{d:coniii}, 
let $(\xi_n)_{n\in\NN}$ be a sequence in $\RP$ and let
$(\varepsilon_n)_{n\in\NN}$ be a summable sequence in $\RP$ such
that 
\begin{equation}
\label{e:13}
\big(\forall n\in\NN\big)\quad
\xi_{n+1}\leq\Sum_{j=0}^n\mu_{n,j}\xi_j+\varepsilon_n.
\end{equation}
Then, in view of \ref{p:concii}, for every integer $n\geq K-1$, 
\begin{equation}
\label{e:genM}
\xi_{n+1}\leq\Sum_{k=0}^{K-1}\mu_{n,n-k}\xi_{n-k}+\varepsilon_n.
\end{equation}
Set $\mu=\inf_{n\in\NN}\mu_{n,n}$. If $\mu=1$, then \ref{p:conci} 
and \eqref{e:genM} imply that, for every integer $n\geq K-1$, 
\begin{equation}
\label{e:gen7}
0\leq\xi_{n+1}\leq\xi_n+\varepsilon_n,
\end{equation}
and the convergence of $(\xi_n)_{n\in\NN}$ therefore follows from
\cite[Lemma~5.31]{Livre1}. We
henceforth assume that $\mu<1$ and, without loss of generality,
that $K>1$. For every integer $n\geq K-1$, define 
$\widehat{\xi}_n=\max_{0\leq k\leq K-1}\xi_{n-k}$, and observe
that \ref{p:conci} and \eqref{e:genM} yield
$\xi_{n+1}\leq\widehat{\xi}_n+\varepsilon_n$.
Hence,
\begin{equation}
(\forall n\in\{K-1,K,\ldots\})\quad 
0\leq\widehat{\xi}_{n+1}\leq\widehat{\xi}_n+\varepsilon_n
\end{equation}
and we deduce from \cite[Lemma~5.31]{Livre1} that 
$(\widehat{\xi}_n)_{n\in\NN}$ converges to 
some number $\eta\in\RP$. Therefore, if 
$(\xi_n)_{n\in\NN}$ converges, then its limit is $\eta$ as well. 
Let us argue by contradiction by assuming that
$\xi_n\not\to\eta$.
Then there exists $\nu\in\RPP$ such that 
\begin{equation}
\label{e:3l}
(\forall N\in\NN)(\exi n_0\in\{N,N+1,\ldots\})\quad 
|\xi_{n_0}-\eta|>\nu.
\end{equation}
Set 
\begin{equation}
\label{e:8}
\delta=\min\bigg\{\dfrac{\mu^{K-1}}{1-\mu^{K-1}},1\bigg\}
\quad\text{and}\quad
\nu'=\dfrac{\delta\nu}{4}.
\end{equation}
Since $\widehat{\xi}_n\to\eta$ 
and $\sum_{n\in\NN}\varepsilon_n<\pinf$, let us fix an 
integer $N\geq K-1$ such that
\begin{equation}
\label{e:prime}
(\forall n\in\{N,N+1,\ldots\})\quad
\eta-\mu^{K-1}\frac{\nu}{4}\leq
\widehat{\xi}_n\leq\eta+\nu'\quad\text{and}\quad
\Sum_{j\geq n}\varepsilon_j\leq(1-\mu^{K-1})\nu'.
\end{equation}
Then
\begin{equation}
\label{e:7}
(\forall k\in\{1,2,\ldots\})(\forall n\in\{N,N+1,\ldots\})
\quad\sum_{j=1}^{k}\mu^{j-1}\varepsilon_{n+k-j}\leq
\sum_{j\geq n}\varepsilon_j\leq(1-\mu^{K-1})\nu',
\end{equation}
while \eqref{e:genM} and \ref{p:conci} imply that 
\begin{align}
\label{e:3M}
(\forall n\in\{N,N+1,\ldots\})\quad\xi_{n+1}&\leq
\mu_{n,n}\xi_{n}+
\Sum_{k=1}^{K-1}\mu_{n,n-k}\xi_{n-k}+\varepsilon_n
\nonumber\\
&\leq
\mu_{n,n}\xi_n+(1-\mu_{n,n})\widehat{\xi}_n+\varepsilon_n
\nonumber\\
&=\mu\xi_n+(1-\mu)\widehat{\xi}_n
+(\mu_{n,n}-\mu)(\xi_n-\widehat{\xi}_n)+\varepsilon_n
\nonumber\\
&\leq\mu\xi_n+(1-\mu)\widehat{\xi}_n+\varepsilon_n
\nonumber\\
&\leq\mu\xi_n+(1-\mu)(\eta+\nu')+\varepsilon_n.
\end{align}
It follows from \eqref{e:3l} that there
exists an integer $n_0\geq N$ such that 
$|\xi_{n_0}-\eta|>\nu$, i.e., 
\begin{equation}
\label{e:4j}
\xi_{n_0}>\eta+\nu\quad\text{or}\quad
0\leq\xi_{n_0}<\eta-\nu.
\end{equation}
Suppose that $\xi_{n_0}>\eta+\nu$. Then 
\eqref{e:8} and \eqref{e:prime} imply that
$\nu<\xi_{n_0}-\eta\leq\widehat{\xi}_{n_0}-
\eta\leq\nu'\leq\nu/4$,
which is impossible. Therefore, $0\leq\xi_{n_0}<\eta-\nu$
and it follows from \eqref{e:3M} that
\begin{equation}
\label{e:9gf38}
\xi_{n_0+1}\leq\mu(\eta-\nu)+(1-\mu)(\eta+\nu')
+\varepsilon_{n_0}=\eta+(1-\mu)\nu'-\mu\nu
+\varepsilon_{n_0}.
\end{equation}
Let us show by induction that, for every integer $k\geq 1$, 
\begin{equation}
\label{e:fm8}
\xi_{n_0+k}\leq\eta+(1-\mu^k)\nu'-\mu^k\nu+
\Sum_{j=1}^{k}\mu^{j-1}\varepsilon_{n_0+k-j}.
\end{equation}
In view of \eqref{e:9gf38}, this inequality holds for $k=1$. 
Now suppose that it holds for some integer $k\geq 1$. 
Then we deduce from \eqref{e:3M} and \eqref{e:fm8} that 
\begin{align}
\label{e:fm8+1}
\xi_{n_0+k+1}
&\leq\mu\xi_{n_0+k}+\varepsilon_{n_0+k}+(1-\mu)(\eta+\nu')
\nonumber\\
&\leq\mu\eta+\mu(1-\mu^k)\nu'-\mu^{k+1}\nu
+\Sum_{j=0}^{k}\mu^{j}\varepsilon_{n_0+k-j}
+(1-\mu)(\eta+\nu')
\nonumber\\
&=\eta+(1-\mu^{k+1})\nu'
-\mu^{k+1}\nu
+\Sum_{j=1}^{k+1}\mu^{j-1}\varepsilon_{n_0+k+1-j}, 
\end{align}
which completes the induction argument.
Since $\mu\in\zeroun$, we derive from
\eqref{e:fm8}, \eqref{e:7}, and \eqref{e:8} that 
\begin{align}
\label{e:fmy}
(\forall k\in\{1,\ldots,K-1\})\quad 
\xi_{n_0+k}
&\leq\eta+(1-\mu^{k})\nu'-\mu^{k}\nu+(1-\mu^{K-1})\nu'\nonumber\\
&\leq\eta+2(1-\mu^{K-1})\nu'-\mu^{K-1}\nu\nonumber\\
&=\eta+(1-\mu^{K-1})\frac{\delta\nu}{2}
-\mu^{K-1}\nu\nonumber\\
&\leq\eta-\mu^{K-1}\frac{\nu}{2}.
\end{align}
Therefore, by \eqref{e:prime},
\begin{equation}
\eta-\mu^{K-1}\frac{\nu}{4}
\leq\widehat{\xi}_{n_0+K-1}\leq\eta
-\mu^{K-1}\frac{\nu}{2}. 
\end{equation}
We thus reach a contradiction and conclude that 
$(\xi_n)_{n\in\NN}$ converges.
\end{proof}

We derive from Proposition~\ref{p:conc} a new instance of 
a concentrating array on which the main result of 
Section~\ref{sec:3} will hinge.

\begin{example}
\label{ex:6}
Let $I$ be a nonempty finite set, let $(\omega_i)_{i\in I}$ be a 
family in $\rzeroun$ such that $\sum_{i\in I}\omega_i=1$, let
$(I_n)_{n\in\NN}$ be a sequence of nonempty subsets of $I$, and let
$K$ be a strictly positive integer such that $(\forall n\in\NN)$ 
$\bigcup_{0\leq k\leq K-1}I_{n+k}=I$. Set 
\begin{equation}
\label{e:z14}
(\forall n\in\NN)(\forall j\in\{0,\ldots,n\})\quad
\mu_{n,j}=
\begin{cases}
1,&\text{if}\;\;n=j<K;\\
\Sum_{i\in I_j\smallsetminus\bigcup_{k=j+1}^nI_k}
\omega_i,&\text{if}\;\;0\leq n-K<j;\\
0,&\text{otherwise.}
\end{cases}
\end{equation}
Then the following hold:
\begin{enumerate}
\setlength{\itemsep}{0pt}
\item
\label{ex:6i}
$(\mu_{n,j})_{n\in\NN,0\leq j\leq n}$ is a concentrating array.
\item
\label{ex:6ii}
Let $\NN\ni n\geq K-1$, let $(\xi_j)_{0\leq j\leq n}$ be 
in $\RP$, and, for every $i\in I$, define 
$\lai{i}{n}=\max\menge{k\in\{n-K+1,\ldots,n\}}{i\in I_k}$. Then
\begin{equation}
\label{e:z10}
\Sum_{j=0}^{n}\mu_{n,j}\xi_j=
\Sum_{i\in I}\omega_i\xi_{\lai{i}{n}}.
\end{equation}
\end{enumerate}
\end{example}
\begin{proof}
Let $n\in\NN$. If $n\geq K-1$, 
we have $\bigcup_{0\leq k\leq K-1}I_{n-k}=I$ and therefore 
\begin{multline}
\label{e:ti}
I\;\text{is the union of the disjoint sets}\\
\Bigg(I_{n},I_{n-1}\smallsetminus I_n,
I_{n-2}\smallsetminus (I_n\cup I_{n-1}),\ldots,
I_{n-K+2}\smallsetminus\bigcup_{k=n-K+3}^{n}I_{k},
I_{n-K+1}\smallsetminus\bigcup_{k=n-K+2}^{n}I_{k}\Bigg).
\end{multline}

\ref{ex:6i}:
It is clear from \eqref{e:z14} that, for every
integer $j\in[0,n-K]$, $\mu_{n,j}=0$. 
In turn, we derive from \eqref{e:z14} and \eqref{e:ti} that
\begin{equation}
\begin{cases}
\Sum_{j=0}^n\mu_{n,j}=
\mu_{n,n}=1,&\text{if}\;\;n<K;\\
\Sum_{j=0}^n\mu_{n,j}=\Sum_{j=n-K+1}^n\mu_{n,j}
=\Sum_{j=n-K+1}^n\:\Sum_{i\in I_j\smallsetminus
\bigcup_{k=j+1}^{n}I_k}\omega_i
=\Sum_{i\in I}\omega_i=1,&\text{if}\;\;n\geq K.
\end{cases}
\end{equation}
Finally, $\inf_{n\in\NN}\mu_{n,n}=\inf_{n\in\NN}
\sum_{i\in I_n}\omega_i\geq\min_{i\in I}\omega_i>0$. 
All the properties of Proposition~\ref{p:conc} are 
therefore satisfied.

\ref{ex:6ii}:
We have 
\begin{equation}
\label{e:z16}
\big(\forall j\in\{n-K+1,\ldots,n\}\big)
\Bigg(\forall i\in I_j\smallsetminus
\bigcup_{k=j+1}^{n}I_{k}\Bigg)\quad\lai{i}{n}=j.
\end{equation}
Hence, in view of \eqref{e:z14},
\begin{equation}
\label{e:z17}
\big(\forall j\in\{n-K+1,\ldots,n\}\big)\quad 
\Sum_{i\in I_j\smallsetminus
\bigcup_{k=j+1}^{n}I_k}\omega_i\xi_{\lai{i}{n}}=
\Sum_{i\in I_j\smallsetminus
\bigcup_{k=j+1}^{n}I_k}\omega_i\xi_j=\mu_{n,j}\xi_j.
\end{equation}
Consequently, \eqref{e:ti} yields
\begin{equation}
\Sum_{j=0}^n\mu_{n,j}\xi_j=\Sum_{j=n-K+1}^n\;
\Sum_{i\in I_j\smallsetminus\bigcup_{k=j+1}^{n}I_k}
\omega_i\xi_{\lai{i}{n}}=\Sum_{i\in I}\omega_i\xi_{\lai{i}{n}},
\end{equation}
which concludes the proof.
\end{proof}

\section{Solving Problem~\ref{prob:1} with block updates}
\label{sec:3}

We formalize the ideas underlying \eqref{e:1} by proposing a
method in which variable subgroups of operators are updated over
the course of the iterations, and establish its convergence 
properties. At iteration $n$, the block of operators to be
updated is $(T_{i,n})_{i\in I_n}$. For added flexibility, an error
$e_{i,n}$ is tolerated in the application of the operator
$T_{i,n}$. We operate under the following assumption, where $m$ is
as in Problem~\ref{prob:1}.

\begin{assumption}
\label{a:12}
$K$ is a strictly positive integer and 
$(I_n)_{n\in\NN}$ is a sequence of nonempty subsets of
$\{1,\ldots,m\}$ such that 
\begin{equation}
(\forall n\in\NN)\quad\bigcup_{k=0}^{K-1}I_{n+k}=\{1,\ldots,m\}.
\end{equation}
For every integer $n\geq K-1$, define
\begin{equation}
\label{e:ui2}
(\forall i\in\{1,\ldots,m\})\quad\lai{i}{n}=
\max\menge{k\in\{n-K+1,\ldots,n\}}{i\in I_k}.
\end{equation}
The sequences $(e_{0,n})_{n\in\NN}$, 
$(e_{1,n})_{n\in\NN}$, \ldots, 
$(e_{m,n})_{n\in\NN}$ are in $\HH$ and satisfy 
\begin{equation}
\label{e:96}
\sum_{n\geq K-1}\|e_{0,n}\|<\pinf\quad\text{and}\quad
(\forall i\in\{1,\ldots,m\})\quad
\sum_{n\geq K-1}\|e_{i,\lai{i}{n}}\|<\pinf.
\end{equation}
\end{assumption}

\begin{theorem}
\label{t:1}
Consider the setting of Problem~\ref{prob:1} together with
Assumption~\ref{a:12}. Let $\varepsilon\in\zeroun$ and, 
for every $n\in\NN$ and every $i\in\{0\}\cup I_n$, let 
$\alpha_{i,n}\in\left]0,1/(1+\varepsilon)\right[$ and let 
$T_{i,n}\colon\HH\to\HH$ be $\alpha_{i,n}$-averaged.
Suppose that, for every integer $n\geq K-1$, 
\begin{equation}
\label{e:f}
\emp\neq\Fix\Bigg(T_{0}\circ\sum_{i=1}^m\omega_iT_i\Bigg)
\subset\Fix\Bigg(T_{0,n}\circ\sum_{i=1}^m\omega_i
T_{i,\lai{i}{n}}\Bigg).
\end{equation}
Let $x_0\in\HH$, let $(t_{i,-1})_{1\leq i\leq m}\in\HH^m$, 
and iterate
\begin{equation}
\label{e:a1}
\begin{array}{l}
\text{for}\;n=0,1,\ldots\\
\left\lfloor
\begin{array}{l}
\text{for every}\;i\in I_n\\
\left\lfloor
\begin{array}{l}
t_{i,n}=T_{i,n}x_n+e_{i,n}
\end{array}
\right.\\
\text{for every}\;i\in\{1,\ldots,m\}\smallsetminus I_n\\
\left\lfloor
\begin{array}{l}
t_{i,n}=t_{i,n-1}\\
\end{array}
\right.\\
x_{n+1}=T_{0,n}\Bigg(\Sum_{i=1}^m\omega_it_{i,n}\Bigg)+e_{0,n}.
\end{array}
\right.\\
\end{array}
\end{equation}
Let $x$ be a solution to Problem~\ref{prob:1}.
Then the following hold:
\begin{enumerate}
\setlength{\itemsep}{0pt}
\item 
\label{t:1i-}
$(x_n)_{n\in\NN}$ is bounded. 
\item 
\label{t:1i}
Let $i\in\{1,\ldots,m\}$. Then
$x_{\lai{i}{n}}-T_{i,\lai{i}{n}}x_{\lai{i}{n}}
+T_{i,\lai{i}{n}}x-x\to 0$.
\item 
\label{t:1ii}
Let $i\in\{1,\ldots,m\}$ and $j\in\{1,\ldots,m\}$. Then
$T_{i,\lai{i}{n}}x_{\lai{i}{n}}
-T_{j,\lai{j}{n}}x_{\lai{j}{n}}
-T_{i,\lai{i}{n}}x
+T_{j,\lai{j}{n}}x\to 0$.
\item 
\label{t:1iii-}
Let $i\in\{1,\ldots,m\}$. Then
$x_{\lai{i}{n}}-x_n\to 0$.
\item 
\label{t:1iii}
$x_n-T_{0,n}(\sum_{i=1}^m\omega_iT_{i,\lai{i}{n}}x_n)\to 0$.
\item 
\label{t:1iv}
Suppose that every weak sequential cluster point of 
$(x_n)_{n\in\NN}$ solves Problem~\ref{prob:1}. Then the following
hold:
\begin{enumerate}
\setlength{\itemsep}{0pt}
\item
\label{t:1iva}
$(x_n)_{n\in\NN}$ converges weakly to a solution to
Problem~\ref{prob:1}.
\item
\label{t:1ivb}
Suppose that $(x_n)_{n\in\NN}$ has a strong sequential cluster 
point. Then $(x_n)_{n\in\NN}$ converges strongly to a solution to
Problem~\ref{prob:1}.
\end{enumerate}
\item
\label{t:1vi}
For every $n\geq K-1$ and every $i\in\{0\}\cup I_n$,
let $\rho_{i}\in\rzeroun$ be a Lipschitz constant of
$T_{i,n}$. Suppose that \eqref{e:a1} is implemented without errors
and that, for some $i\in\{0,\ldots,m\}$, $\rho_i<1$. 
Then $(x_n)_{n\in\NN}$ converges linearly 
to the unique solution to Problem~\ref{prob:1}.
\end{enumerate}
\end{theorem}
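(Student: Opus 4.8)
The plan is to route every assertion through the convergence principle of Lemma~\ref{l:1}, applied to the concentrating array of Example~\ref{ex:6} with $I=\{1,\ldots,m\}$, and to extract the asymptotic regularity statements from the residual term $\beta_n$ it produces. The first observation is structural: because the recycling rule in \eqref{e:a1} leaves $t_{i,\cdot}$ unchanged between two consecutive activations of index $i$, and $\lai{i}{n}$ is the most recent activation time in the current window, one has $t_{i,n}=T_{i,\lai{i}{n}}x_{\lai{i}{n}}+e_{i,\lai{i}{n}}$ for every $n\geq K-1$ and every $i\in\{1,\ldots,m\}$. Fix a solution $x$; by the inclusion in \eqref{e:f}, $x=T_{0,n}(\sum_{i=1}^m\omega_iT_{i,\lai{i}{n}}x)$ for every $n\geq K-1$. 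Writing $y_n=\sum_{i=1}^m\omega_it_{i,n}$ and $p_n=\sum_{i=1}^m\omega_iT_{i,\lai{i}{n}}x$, nonexpansiveness of $T_{0,n}$ and of each $T_{i,\lai{i}{n}}$ together with the triangle inequality give $\|x_{n+1}-x\|\leq\sum_{i=1}^m\omega_i\|x_{\lai{i}{n}}-x\|+\epsilon_n'$, where $\epsilon_n'=\sum_{i=1}^m\omega_i\|e_{i,\lai{i}{n}}\|+\|e_{0,n}\|$ is summable by \eqref{e:96}. Since identity \eqref{e:z10} turns the weighted average into $\sum_{j=0}^n\mu_{n,j}\|x_j-x\|$, the sequence $(\|x_n-x\|)_{n\in\NN}$ obeys, for $n\geq K-1$, the hypothesis of Definition~\ref{d:con}\ref{d:coniii} for the concentrating array of Example~\ref{ex:6}; as the omitted finite head is immaterial for convergence, $(\|x_n-x\|)_{n\in\NN}$ converges and $(x_n)_{n\in\NN}$ is bounded, which is \ref{t:1i-}.

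Next I pass to squared norms to obtain the master inequality. Applying the characterization of averaged operators \cite{Livre1} to $T_{0,n}$, the variance identity for convex combinations \cite{Livre1}, and the same characterization to each $T_{i,\lai{i}{n}}$ yields, with $u_{i,n}=T_{i,\lai{i}{n}}x_{\lai{i}{n}}-T_{i,\lai{i}{n}}x$, $v_{i,n}=x_{\lai{i}{n}}-T_{i,\lai{i}{n}}x_{\lai{i}{n}}+T_{i,\lai{i}{n}}x-x$, and $q_n=(\Id-T_{0,n})y_n-(\Id-T_{0,n})p_n$, a bound of the form $\|x_{n+1}-x\|^2\leq\sum_{i=1}^m\omega_i\|x_{\lai{i}{n}}-x\|^2-\beta_n+\varepsilon_n$, where the constraint $\alpha_{i,n}<1/(1+\varepsilon)$ (so that $(1-\alpha_{i,n})/\alpha_{i,n}>\varepsilon$) produces the nonnegative residual $\beta_n=\varepsilon\sum_{i=1}^m\omega_i\|v_{i,n}\|^2+\sum_{1\leq i<j\leq m}\omega_i\omega_j\|u_{i,n}-u_{j,n}\|^2+\varepsilon\|q_n\|^2$, and where boundedness from the previous step lets the error cross-terms be absorbed into a summable $(\varepsilon_n)_{n\in\NN}$. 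Invoking \eqref{e:z10} once more, this is exactly inequality \eqref{e:x0} with $\phi\colon t\mapsto t^2$ and $C$ the solution set, so Lemma~\ref{l:1}\ref{l:1ii} gives $\beta_n\to0$. As each summand of $\beta_n$ is nonnegative, $v_{i,n}\to0$, which is \ref{t:1i}; $u_{i,n}-u_{j,n}\to0$, which is \ref{t:1ii}; and $q_n\to0$.

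The diagonal statement \ref{t:1iii-} is then the decisive step. Rewriting $v_{i,n}\to0$ as $u_{i,n}-(x_{\lai{i}{n}}-x)\to0$ and combining with $u_{i,n}-u_{j,n}\to0$, finiteness of the index set gives $\max_{i,j}\|x_{\lai{i}{n}}-x_{\lai{j}{n}}\|\to0$; choosing, for each $n$, an index $i\in I_n$ (so that $\lai{i}{n}=n$ and $x_{\lai{i}{n}}=x_n$) yields $x_{\lai{j}{n}}-x_n\to0$ for every $j$, which is \ref{t:1iii-}. Expanding $q_n\to0$ gives $(x_{n+1}-x)-\sum_{i=1}^m\omega_iu_{i,n}\to0$; with \ref{t:1i} this becomes $x_{n+1}-\sum_{i=1}^m\omega_ix_{\lai{i}{n}}\to0$, and with \ref{t:1iii-} it gives $x_{n+1}-x_n\to0$. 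Replacing each $x_{\lai{i}{n}}$ by $x_n$ inside the nonexpansive operators via \ref{t:1iii-} and cancelling the vanishing errors shows $x_{n+1}-T_{0,n}(\sum_{i=1}^m\omega_iT_{i,\lai{i}{n}}x_n)\to0$, whence \ref{t:1iii} follows from $x_{n+1}-x_n\to0$. Since \eqref{e:x0} holds for every $x$ in the solution set, assertions \ref{t:1iva} and \ref{t:1ivb} are read off directly from Lemma~\ref{l:1}\ref{l:1iii} and Lemma~\ref{l:1}\ref{l:1iv}.

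For the linear rate \ref{t:1vi}, in the error-free Lipschitz regime the chain of the first paragraph sharpens to $\|x_{n+1}-x\|\leq\rho_0\sum_{i=1}^m\omega_i\rho_i\|x_{\lai{i}{n}}-x\|\leq\tau\max_{n-K+1\leq k\leq n}\|x_k-x\|$ for $n$ large, with $\tau=\rho_0\sum_{i=1}^m\omega_i\rho_i<1$ whenever some $\rho_i<1$; the same constant shows $T_0\circ\sum_{i=1}^m\omega_iT_i$ to be a strict contraction, so the solution is unique. Putting $M_n=\max_{n-K+1\leq k\leq n}\|x_k-x\|$, this bound gives both $M_{n+1}\leq M_n$ and $M_{n+K}\leq\tau M_n$, so $M_n$ --- and a fortiori $\|x_n-x\|$ --- converges to $0$ at the geometric rate $\tau^{1/K}$. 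The main obstacle is the construction in the second and third paragraphs: assembling a single inequality \eqref{e:x0} whose nonnegative residual simultaneously dominates all three defects, and then converting their vanishing into \ref{t:1iii-}, the statement that reconciles the delayed, recycled evaluations $x_{\lai{i}{n}}$ with the current iterate $x_n$ and thereby legitimizes block activation in the absence of common fixed points.
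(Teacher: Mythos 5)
Your proposal reproduces the paper's argument essentially step for step: the same triangle-inequality and squared-norm master inequalities fed into Lemma~\ref{l:1} via the concentrating array of Example~\ref{ex:6}, the same three-part nonnegative residual yielding \ref{t:1i}--\ref{t:1iii}, the same reduction of \ref{t:1iv} to Lemma~\ref{l:1}, and an equivalent running-max induction for the linear rate. The only blemish is in \ref{t:1vi}: the constants $\rho_i$ are Lipschitz constants of the operators $T_{i,n}$, not of the $T_i$, so uniqueness should be deduced from \eqref{e:f} (every fixed point of $T_0\circ\sum_{i=1}^m\omega_iT_i$ is a fixed point of the $\rho$-Lipschitzian composition $T_{0,n}\circ\sum_{i=1}^m\omega_iT_{i,\lai{i}{n}}$ with $\rho<1$, which has at most one fixed point) rather than from a contraction property of $T_0\circ\sum_{i=1}^m\omega_iT_i$ itself.
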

\begin{proof}
Let us fix temporarily an integer $n\geq K-1$. 
We first observe that, by nonexpansiveness of the
operators $T_{0,n}$ and $(T_{i,\lai{i}{n}})_{1\leq i\leq m}$,
\begin{align}
\label{e:49}
&\hskip -6mm
\big(\forall(y,e_0,\ldots,e_m)\in\HH^{m+2}\big)
\quad\bigg\|T_{0,n}\bigg(\Sum_{i=1}^m\omega_i\big(
T_{i,\lai{i}{n}}x_{\lai{i}{n}}+e_i\big)\bigg)+e_0-
T_{0,n}\bigg(\Sum_{i=1}^m\omega_i
T_{i,\lai{i}{n}}y\bigg)\bigg\|\nonumber\\
&\hskip 45mm\leq\bigg\|\Sum_{i=1}^m\omega_i
T_{i,\lai{i}{n}}x_{\lai{i}{n}}-
\Sum_{i=1}^m\omega_iT_{i,\lai{i}{n}}y+\sum_{i=1}^m\omega_ie_i
\bigg\|+\|e_0\|\nonumber\\
&\hskip 45mm\leq\Sum_{i=1}^m\omega_i\big
\|T_{i,\lai{i}{n}}x_{\lai{i}{n}}-
T_{i,\lai{i}{n}}y\big\|+\|e_0\|+\sum_{i=1}^m\omega_i\|e_i\|
\nonumber\\
&\hskip 45mm\leq\Sum_{i=1}^m\omega_i\|x_{\lai{i}{n}}-y\|+\|e_0\|+
\sum_{i=1}^m\|e_i\|.
\end{align}
We also note that \eqref{e:ui2} and \eqref{e:a1} yield
\begin{equation}
\label{e:y}
(\forall i\in\{1,\ldots,m\})\quad
t_{i,n}=T_{i,\lai{i}{n}}x_{\lai{i}{n}}+e_{i,\lai{i}{n}}.
\end{equation}
It follows from \eqref{e:a1}, \eqref{e:y}, \eqref{e:f},
and \eqref{e:49} that
\begin{align}
\label{e:zib1}
\|x_{n+1}-x\|
&=\bigg\|T_{0,n}\bigg(\Sum_{i=1}^m\omega_i\big(T_{i,\lai{i}{n}}
x_{\lai{i}{n}}+e_{i,\lai{i}{n}}\big)\bigg)+e_{0,n}-
T_{0,n}\bigg(\Sum_{i=1}^m\omega_i 
T_{i,\lai{i}{n}}x\bigg)\bigg\| \nonumber\\
&\leq\Sum_{i=1}^m\omega_i\|x_{\lai{i}{n}}-x\|+\|e_{0,n}\|+
\Sum_{i=1}^{m}\|e_{i,\lai{i}{n}}\|.
\end{align}
Now define 
$(\mu_{k,j})_{k\in\NN,0\leq j\leq k}$ as in 
\eqref{e:z14}, with $I=\{1,\ldots,m\}$, and set 
$\varepsilon_n=\|e_{0,n}\|+\sum_{i=1}^{m}\|e_{i,\lai{i}{n}}\|$.
Then we derive from Example~\ref{ex:6}\ref{ex:6ii} that
\begin{equation}
\label{e:h1}
\Sum_{i=1}^m\omega_i\|x_{\lai{i}{n}}-x\|=
\Sum_{j=0}^{n}\mu_{n,j}\|x_j-x\|,
\end{equation}
and it follows from \eqref{e:zib1} and \eqref{e:96} that 
\begin{equation}
\label{e:x7}
\|x_{n+1}-x\|\leq\Sum_{j=0}^{n}\mu_{n,j}\|x_j-x\|+\varepsilon_n,
\quad\text{where}\quad\sum_{k\geq K-1}\varepsilon_k<\pinf.
\end{equation}
Hence, Lemma~\ref{l:1}\ref{l:1i} guarantees that 
\begin{equation}
\label{e:x1}
(x_k)_{k\in\NN}\;\text{is bounded}.
\end{equation}
Consequently, using \eqref{e:96} and \eqref{e:49}, we obtain 
\begin{equation}
\label{e:n}
\nu_0=\sup_{k\geq K-1}
\Bigg(2\bigg\|T_{0,k}\bigg(\Sum_{i=1}^m\omega_i
\big(T_{i,\lai{i}{k}}x_{\lai{i}{k}}+e_{i,\lai{i}{k}}\big)\bigg)-
T_{0,k}\bigg(\Sum_{i=1}^m\omega_iT_{i,\lai{i}{k}}x\bigg)\bigg\|
+\|e_{0,k}\|\Bigg)<\pinf
\end{equation}
and 
\begin{equation}
\nu=\sup_{k\geq K-1}\Bigg(\Sum_{i=1}^m\omega_i\|e_{i,\lai{i}{k}}\|
+2\bigg\|\Sum_{i=1}^m\omega_i\big(T_{i,\lai{i}{k}} x_{\lai{i}{k}}
-T_{i,\lai{i}{k}}x\big)\bigg\|\Bigg)<\pinf.
\end{equation}
In addition, for every $y\in\HH$ and every $z\in\HH$, it follows
from \cite[Proposition~4.35]{Livre1} that
\begin{align}
\label{e:av2}
\|T_{0,n}y-T_{0,n}z\|^2
&\leq\|y-z\|^2-
\displaystyle{\frac{1-\alpha_{0,n}}{\alpha_{0,n}}}
\|(\Id-T_{0,n})y-(\Id-T_{0,n})z\|^2\nonumber\\
&\leq\|y-z\|^2-\varepsilon\|(\Id-T_{0,n})y-(\Id-T_{0,n})z\|^2
\end{align}
and, likewise, that
\begin{equation}
\label{e:av22}
(\forall i\in\{1,\ldots,m\})\quad
\|T_{i,\lai{i}{n}}y-T_{i,\lai{i}{n}}z\|^2\leq
\|y-z\|^2-\varepsilon
\|(\Id-T_{i,\lai{i}{n}})y-(\Id-T_{i,\lai{i}{n}})z\|^2.
\end{equation}
Hence, we deduce from \eqref{e:a1}, \eqref{e:y}, 
\eqref{e:f}, and \cite[Lemma~2.14(ii)]{Livre1} that
\begin{align}
\label{e:h0}
\|x_{n+1}-x\|^2
&=\bigg\|T_{0,n}\bigg(\Sum_{i=1}^m\omega_i(T_{i,\lai{i}{n}}
x_{\lai{i}{n}}+e_{i,\lai{i}{n}})\bigg)-
T_{0,n}\bigg(\Sum_{i=1}^m\omega_i 
T_{i,\lai{i}{n}}x\bigg)+e_{0,n}\bigg\|^2\nonumber\\
&\leq\bigg\|T_{0,n}\bigg(\Sum_{i=1}^m\omega_i(T_{i,\lai{i}{n}}
x_{\lai{i}{n}}+e_{i,\lai{i}{n}})\bigg)-
T_{0,n}\bigg(\Sum_{i=1}^m\omega_i 
T_{i,\lai{i}{n}}x\bigg)\bigg\|^2
+\nu_0\|e_{0,n}\| \nonumber\\
&\leq\bigg\|\Sum_{i=1}^m\omega_i\big(T_{i,\lai{i}{n}}
x_{\lai{i}{n}}-T_{i,\lai{i}{n}}x\big)
\bigg\|^2\nonumber\\
&\quad\;-\varepsilon 
\bigg\|(\Id-T_{0,n})\bigg(\Sum_{i=1}^m\omega_i(T_{i,\lai{i}{n}}
x_{\lai{i}{n}}+e_{i,\lai{i}{n}})\bigg)-
(\Id-T_{0,n})\bigg(\Sum_{i=1}^m\omega_i T_{i,\lai{i}{n}}
x\bigg)\bigg\|^2\nonumber\\
&\quad\;+\nu_0\|e_{0,n}\|+
\nu\Sum_{i=1}^m\omega_i\|e_{i,\lai{i}{n}}\|
\nonumber\\
&\leq\Sum_{i=1}^m\omega_i\|T_{i,\lai{i}{n}}
x_{\lai{i}{n}}-T_{i,\lai{i}{n}}x\|^2\nonumber\\
&\quad\;-\frac{1}{2}\Sum_{i=1}^m\Sum_{j=1}^m\omega_i\omega_j
\big\|T_{i,\lai{i}{n}}x_{\lai{i}{n}}-T_{i,\lai{i}{n}}x
-T_{j,\lai{j}{n}}x_{\lai{j}{n}}+
T_{j,\lai{j}{n}}x\big\|^2\nonumber\\
&\quad\;
-\varepsilon\bigg\|(\Id-T_{0,n})\bigg(\Sum_{i=1}^m
\omega_i(T_{i,\lai{i}{n}}x_{\lai{i}{n}}+e_{i,\lai{i}{n}})\bigg)+x-
\Sum_{i=1}^m\omega_i T_{i,\lai{i}{n}}x\bigg\|^2
\nonumber\\
&\quad\; +\nu_0\|e_{0,n}\|
+\nu\Sum_{i=1}^m\omega_i\|e_{i,\lai{i}{n}}\|\nonumber\\
&\leq\Sum_{i=1}^m\omega_i\|x_{\lai{i}{n}}-x\|^2
-\varepsilon\Sum_{i=1}^m\omega_i
\big\|(\Id-T_{i,\lai{i}{n}})x_{\lai{i}{n}}
-(\Id-T_{i,\lai{i}{n}})x\big\|^2\nonumber\\
&\quad\;-\frac{1}{2}\Sum_{i=1}^m\Sum_{j=1}^m\omega_i\omega_j
\big\|T_{i,\lai{i}{n}}x_{\lai{i}{n}}-
T_{i,\lai{i}{n}}x-T_{j,\lai{j}{n}}x_{\lai{j}{n}}+
T_{j,\lai{j}{n}}x\big\|^2\nonumber\\
&\quad\;
-\varepsilon\bigg\|(\Id-T_{0,n})\bigg(\Sum_{i=1}^m
\omega_i(T_{i,\lai{i}{n}}x_{\lai{i}{n}}+e_{i,\lai{i}{n}})\bigg)+x-
\Sum_{i=1}^m\omega_i T_{i,\lai{i}{n}}x\bigg\|^2\nonumber\\
&\quad\; +\nu_0\|e_{0,n}\|
+\nu\Sum_{i=1}^m\omega_i\|e_{i,\lai{i}{n}}\|.
\end{align}
It therefore follows from \eqref{e:h1} that 
\begin{align}
\label{e:h3}
&\hskip -9mm 
\|x_{n+1}-x\|^2\nonumber\\
&\hskip -4mm\leq\Sum_{j=0}^{n}\mu_{n,j}\|x_j-x\|^2
-\varepsilon\Sum_{i=1}^m\omega_i
\big\|x_{\lai{i}{n}}-T_{i,\lai{i}{n}}x_{\lai{i}{n}}
+T_{i,\lai{i}{n}}x-x\big\|^2\nonumber\\
&\hskip -4mm\quad\;-\frac{1}{2}\Sum_{i=1}^m\Sum_{j=1}^m
\omega_i\omega_j\big\|T_{i,\lai{i}{n}}x_{\lai{i}{n}}-
T_{i,\lai{i}{n}}x-T_{j,\lai{j}{n}}x_{\lai{j}{n}}+
T_{j,\lai{j}{n}}x\big\|^2\nonumber\\
&\hskip -4mm\quad\;
-\varepsilon\bigg\|\Sum_{i=1}^m\omega_i(T_{i,\lai{i}{n}}
x_{\lai{i}{n}}+e_{i,\lai{i}{n}})-T_{0,n}\bigg(\Sum_{i=1}^m\omega_i
(T_{i,\lai{i}{n}}x_{\lai{i}{n}}+e_{i,\lai{i}{n}})\bigg)
+x-\Sum_{i=1}^m\omega_i T_{i,\lai{i}{n}}x\bigg\|^2\nonumber\\
&\hskip -4mm\quad\; +\nu_0\|e_{0,n}\|
+\nu\Sum_{i=1}^m\omega_i\|e_{i,\lai{i}{n}}\|.
\end{align}
Hence, Example~\ref{ex:6}\ref{ex:6i}, \eqref{e:96}, and 
Lemma~\ref{l:1}\ref{l:1ii} imply that 
\begin{equation}
\label{e:x2}
\begin{cases}
\displaystyle{\max_{\substack{1\leq i\leq m}}}
\big\|x_{\lai{i}{n}}-T_{i,\lai{i}{n}}x_{\lai{i}{n}}
+T_{i,\lai{i}{n}}x-x\big\|\to 0\\
\displaystyle{\max_{\substack{1\leq i\leq m\\ 1\leq j\leq m}}}
\big\|T_{i,\lai{i}{n}}x_{\lai{i}{n}}
-T_{j,\lai{j}{n}}x_{\lai{j}{n}}-
T_{i,\lai{i}{n}}x+T_{j,\lai{j}{n}}x\big\|\to 0,
\end{cases}
\end{equation}
and that
\begin{equation}
\label{e:t0_i}
\bigg\|\Sum_{i=1}^m
\omega_i(T_{i,\lai{i}{n}}x_{\lai{i}{n}}+e_{i,\lai{i}{n}})
-T_{0,n}\bigg(\Sum_{i=1}^m\omega_i
(T_{i,\lai{i}{n}}x_{\lai{i}{n}}+e_{i,\lai{i}{n}})\bigg)
+x-\Sum_{i=1}^m\omega_i T_{i,\lai{i}{n}}x\bigg\|\to 0.
\end{equation}

\ref{t:1i-}: See \eqref{e:x1}. 

\ref{t:1i}--\ref{t:1ii}: See \eqref{e:x2}.

\ref{t:1iii-}--\ref{t:1iii}: 
It follows from \ref{t:1i} that
\begin{equation}
\label{e:max12-0}
\Sum_{i=1}^m\omega_ix_{\lai{i}{n}}-
\Sum_{i=1}^m\omega_iT_{i,\lai{i}{n}}x_{\lai{i}{n}}+
\Sum_{i=1}^m\omega_iT_{i,\lai{i}{n}}x-x\to 0.
\end{equation}
We also derive from \ref{t:1i} that, for every $i$ and every $j$
in $\{1,\ldots,m\}$, 
\begin{equation}
\label{e:y6}
x_{\lai{i}{n}}-T_{i,\lai{i}{n}}x_{\lai{i}{n}}
-x_{\lai{j}{n}}+T_{j,\lai{j}{n}}x_{\lai{j}{n}}
+T_{i,\lai{i}{n}}x-T_{j,\lai{j}{n}}x\to 0.
\end{equation}
Combining \ref{t:1ii} and \eqref{e:y6}, we obtain
\begin{equation}
\label{e:max3-3}
(\forall i\in\{1,\ldots,m\})(\forall j\in\{1,\ldots,m\})\quad
x_{\lai{i}{n}}-x_{\lai{j}{n}}\to 0.
\end{equation}
Now, let $\bar{\imath}\in\{1,\ldots,m\}$ and $\delta\in\RPP$.
Then \eqref{e:max3-3} implies that, for every 
$j\in\{1,\ldots,m\}$, there exists an integer 
$\overline{N}_{\delta,j}\geq K-1$ such that 
\begin{equation}
\big(\forall n\in\big\{\overline{N}_{\delta,j},
\overline{N}_{\delta,j}+1,\ldots\big\}\big)\quad
\|x_\lai{{\bar\imath}}{n}-x_\lai{j}{n}\|\leq\delta.
\end{equation}
Set 
$\overline{N}_{\delta}=\max_{1\leq j\leq m}\overline{N}_{\delta,j}$.
Then 
\begin{equation}
\big(\forall j\in\{1,\ldots,m\}\big)
\big(\forall n\in\big\{\overline{N}_{\delta},
\overline{N}_{\delta}+1,\ldots\big\}\big)\quad
\|x_\lai{{\bar\imath}}{n}-x_\lai{j}{n}\|\leq\delta.
\end{equation}
Thus, in view of \eqref{e:ui2}, for every integer 
$n\geq\overline{N}_{\delta}$, taking
$j_n\in I_n$ yields $\lai{j_n}{n}=n$ and hence
$\|x_{\lai{\bar\imath}{n}}-x_n\|\leq\delta$. This shows that
\begin{equation}
\label{e:4-1}
(\forall i\in\{1,\ldots,m\})\quad x_{\lai{i}{n}}-x_n\to 0.
\end{equation}
Consequently, it follows from \eqref{e:49} that
\begin{align}
\label{e:4-7}
&\hskip -3mm
\bigg\|T_{0,n}\bigg(\Sum_{i=1}^m\big(\omega_iT_{i,\lai{i}{n}}
x_{\lai{i}{n}}+e_{i,\lai{i}{n}}\big)\bigg)-
T_{0,n}\bigg(\Sum_{i=1}^m\omega_i
T_{i,\lai{i}{n}}x_n\bigg)\bigg\|\nonumber\\
&\leq\Sum_{i=1}^m\omega_i\|x_{\lai{i}{n}}-x_n\|+
\Sum_{i=1}^m\|e_{i,\lai{i}{n}}\|\nonumber\\
&\to 0.
\end{align}
In turn, we derive from \eqref{e:t0_i}, \eqref{e:max12-0}, 
\eqref{e:4-1}, and \eqref{e:96} that
\begin{align}
\label{e:u7}
&\hskip -5mm x_n-T_{0,n}\bigg(\Sum_{i=1}^m\omega_i
T_{i,\lai{i}{n}}x_n\bigg)\nonumber\\
&=T_{0,n}\bigg(\Sum_{i=1}^m\omega_i
(T_{i,\lai{i}{n}}x_{\lai{i}{n}}+e_{i,\lai{i}{n}})\bigg)
-T_{0,n}\bigg(\Sum_{i=1}^m\omega_i
T_{i,\lai{i}{n}}x_n\bigg)\nonumber\\
&\quad\;+\Sum_{i=1}^m\omega_i(T_{i,\lai{i}{n}}x_{\lai{i}{n}}
+e_{i,\lai{i}{n}})
-T_{0,n}\bigg(\Sum_{i=1}^m\omega_i
(T_{i,\lai{i}{n}}x_{\lai{i}{n}}+e_{i,\lai{i}{n}})\bigg)
+x-\Sum_{i=1}^m\omega_iT_{i,\lai{i}{n}}x\nonumber\\
&\quad\;
+\Sum_{i=1}^m\omega_ix_{\lai{i}{n}}
-\Sum_{i=1}^m\omega_iT_{i,\lai{i}{n}}x_{\lai{i}{n}}
+\Sum_{i=1}^m\omega_iT_{i,\lai{i}{n}}x-x
+\Sum_{i=1}^m\omega_i(x_n-x_{\lai{i}{n}})
-\Sum_{i=1}^m\omega_ie_{i,\lai{i}{n}}\nonumber\\
&\to 0.
\end{align}

\ref{t:1iva}: This follows from \eqref{e:x7} and
Lemma~\ref{l:1}\ref{l:1iii}.

\ref{t:1ivb}: By \ref{t:1iva}, there exists a solution $z$ to 
Problem~\ref{prob:1} such that $x_n\weakly z$. Therefore, $z$ must
be the strong cluster point in question, say $x_{k_n}\to z$. 
In view of \eqref{e:x7} and Lemma~\ref{l:1}\ref{l:1iv}, we conclude
that $x_n\to z$. 

\ref{t:1vi}:
Set $\rho=\rho_{0}\sum_{i=1}^m\omega_i\rho_{i}$ and note that
$\rho\in\zeroun$.
For every integer $n\geq K-1$ and every 
$(y_i)_{1\leq i\leq m}\in\HH^m$, \eqref{e:f} yields
\begin{align}
\label{e:w87}
\bigg\|T_{0,n}\bigg(\Sum_{i=1}^m\omega_i T_{i,\lai{i}{n}}y_i\bigg)-
x\bigg\|
&=\bigg\|T_{0,n}\bigg(\Sum_{i=1}^m\omega_i T_{i,\lai{i}{n}}
y_i\bigg)-T_{0,n}\bigg(\Sum_{i=1}^m\omega_i 
T_{i,\lai{i}{n}}x\bigg)\bigg\|\nonumber\\
&\leq\rho_{0}\bigg\|\Sum_{i=1}^m\omega_i T_{i,\lai{i}{n}}
y_i-\Sum_{i=1}^m\omega_i 
T_{i,\lai{i}{n}}x\bigg\|\nonumber\\
&\leq\rho_{0}\Sum_{i=1}^m\omega_i\|T_{i,\lai{i}{n}}
y_i-T_{i,\lai{i}{n}}x\|\nonumber\\
&\leq\rho_{0}\Sum_{i=1}^m\omega_i\rho_{i}
\big\|y_i-x\big\|.
\end{align}
Now let $y\in\Fix(T_0\circ \sum_{i=1}^{m}\omega_iT_i)$. Since
\eqref{e:w87} implies that
\begin{equation}
\label{e:h69}
\|y-x\|
=\bigg\|T_{0,K-1}\bigg(\Sum_{i=1}^m\omega_i 
T_{i,\lai{i}{K-1}}y\bigg)-x\bigg\|\leq\rho\|y-x\|,
\end{equation}
we infer that $y=x$, which shows uniqueness. For every integer 
$n\geq K-1$, \eqref{e:w87} also yields
\begin{equation}
\label{e:h70}
\|x_{n+1}-x\|
=\bigg\|T_{0,n}\bigg(\Sum_{i=1}^m\omega_i T_{i,\lai{i}{n}}
x_{\lai{i}{n}}\bigg)-x\bigg\|
\leq\rho_{0}\Sum_{i=1}^m\omega_i\rho_{i}
\|x_{\lai{i}{n}}-x\|.
\end{equation}
Now set
\begin{equation}
\label{e:z141}
(\forall n\in\NN)\quad\xi_n=\|x_n-x\|.
\end{equation}
It follows from \eqref{e:h70} that
\begin{equation}
\label{e:ineg12}
(\forall n\in\{K-1,K,\ldots\})\quad
\xi_{n+1}\leq\rho_{0}\Sum_{i=1}^m\omega_i\rho_{i}\xi_{\lai{i}{n}}
\leq\rho\widehat{\xi}_n,\quad\text{where}\quad
\widehat{\xi}_n=\max_{1\leq i\leq m}\xi_{\lai{i}{n}}.
\end{equation}
Let us show that
\begin{equation}
\label{e:k8}
(\forall n\in\NN)\quad
\xi_{n}\leq\rho^{\frac{n-K+1}{K}}\widehat{\xi}_{K-1}.
\end{equation}
We proceed by strong induction. We have
\begin{equation}
\label{e:indu2}
(\forall k\in\{0,\ldots,K-1\})\quad 
\xi_{k}\leq\widehat{\xi}_{K-1}
\leq\rho^{\frac{k-K+1}{K}}\widehat{\xi}_{K-1}.
\end{equation}
Next, let $\NN\ni n\geq K-1$ and suppose that
\begin{equation}
\label{e:indu}
(\forall k\in\{0,\ldots,n\})\quad
\xi_{k}\leq\rho^{\frac{k-K+1}{K}}\widehat{\xi}_{K-1}.
\end{equation}
Since $\{\lai{i}{n}\}_{1\leq i\leq m}\subset\{n-K+1,\ldots,n\}$,
there exists $k_n\in\{n-K+1,\ldots,n\}$ such 
that $\widehat{\xi}_n=\xi_{k_n}$. 
Therefore, we derive from \eqref{e:ineg12} and \eqref{e:indu} that
\begin{equation}
\xi_{n+1}\leq\rho\widehat{\xi}_n=\rho\xi_{k_n}
\leq\rho\rho^{\frac{k_n-K+1}{K}}\widehat{\xi}_{K-1}
=\rho^{\frac{k_n+1}{K}}\widehat{\xi}_{K-1}
\leq\rho^{\frac{n-K+2}{K}}\widehat{\xi}_{K-1}.
\end{equation}
We have thus shown that
\begin{equation}
\label{e:X}
(\forall n\in\NN)\quad
\|x_n-x\|\leq\rho^{\frac{1-K}{K}}\widehat{\xi}_{K-1}
\big(\rho^{\frac{1}{K}}\big)^{n},
\end{equation}
which establishes the linear convergence of
$(x_n)_{n\in\NN}$ to $x$.
\end{proof}

\begin{remark}
In applications, the cardinality of $I_n$ may 
be small compared to $m$. In such scenarios, it is advantageous to
set $z_{-1}=\sum_{i=1}^m\omega_i t_{i,-1}$ and write \eqref{e:a1} as 
\begin{equation}
\label{e:a88}
\begin{array}{l}
\text{for}\;n=0,1,\ldots\\
\left\lfloor
\begin{array}{l}
y_n=z_{n-1}-\Sum_{i\in I_n}\omega_it_{i,n-1}\\
\text{for every}\;i\in I_n\\
\left\lfloor
\begin{array}{l}
t_{i,n}=T_{i,n}x_n+e_{i,n}
\end{array}
\right.\\
\text{for every}\;i\in\{1,\ldots,m\}\smallsetminus I_n\\
\left\lfloor
\begin{array}{l}
t_{i,n}=t_{i,n-1}\\
\end{array}
\right.\\
z_n=y_n+\Sum_{i\in I_n}\omega_it_{i,n}\\
x_{n+1}=T_{0,n}z_n+e_{0,n},
\end{array}
\right.\\
\end{array}
\end{equation}
which provides a more economical update equation.
\end{remark}

Next, we specialize our results to the autonomous case, wherein
the operators $(T_i)_{0\leq i\leq m}$ of Problem~\ref{prob:1} are
used directly.

\begin{corollary}
\label{c:1}
Consider the setting of Problem~\ref{prob:1} under
Assumption~\ref{a:12} and the assumption that it has a solution. 
Let $x_0\in\HH$, let $(t_{i,-1})_{1\leq i\leq m}\in\HH^m$, and
iterate
\begin{equation}
\label{e:a2}
\begin{array}{l}
\text{for}\;n=0,1,\ldots\\
\left\lfloor
\begin{array}{l}
\text{for every}\;i\in I_n\\
\left\lfloor
\begin{array}{l}
t_{i,n}=T_{i}x_n+e_{i,n}
\end{array}
\right.\\
\text{for every}\;i\in\{1,\ldots,m\}\smallsetminus I_n\\
\left\lfloor
\begin{array}{l}
t_{i,n}=t_{i,n-1}\\
\end{array}
\right.\\[1mm]
x_{n+1}=T_{0}\Bigg(\Sum_{i=1}^m\omega_it_{i,n}\Bigg)+e_{0,n}.
\end{array}
\right.\\
\end{array}
\end{equation}
Then the following hold:
\begin{enumerate}
\setlength{\itemsep}{0pt}
\item 
\label{c:1i}
Let $x$ be a solution to Problem~\ref{prob:1} and let
$i\in\{1,\ldots,m\}$. Then $x_n-T_ix_n\to x-T_ix$.
\item 
\label{c:1ii}
$(x_n)_{n\in\NN}$ converges weakly to a solution to
Problem~\ref{prob:1}.
\item 
\label{c:1iii}
Suppose that, for some $i\in\{0,\ldots,m\}$, $T_i$ is 
\emph{demicompact} \cite{Petr66}, i.e., every bounded sequence
$(y_n)_{n\in\NN}$
such that $(y_n-T_iy_n)_{n\in\NN}$ converges has a strong 
sequential cluster point. Then $(x_n)_{n\in\NN}$ converges 
strongly to a solution to Problem~\ref{prob:1}.
\item
\label{c:1iv}
Suppose that \eqref{e:a1} is implemented without errors and that, 
for some $i\in\{0,\ldots,m\}$, $T_i$ is a Banach contraction. 
Then $(x_n)_{n\in\NN}$ converges linearly to the unique solution to
Problem~\ref{prob:1}.
\end{enumerate}
\end{corollary}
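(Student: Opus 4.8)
The plan is to derive Corollary~\ref{c:1} as a direct specialization of Theorem~\ref{t:1}, obtained by taking the operators to be autonomous, that is, $T_{i,n}=T_i$ for every $n\in\NN$ and every relevant index $i$. The first step is to verify that this choice satisfies all hypotheses of Theorem~\ref{t:1}. Since each $T_i$ is $\alpha_i$-averaged with $\alpha_i\in\zeroun$, I can fix $\varepsilon\in\zeroun$ small enough that $\alpha_i<1/(1+\varepsilon)$ for all $i\in\{0,\ldots,m\}$, so the averagedness requirement $\alpha_{i,n}\in\left]0,1/(1+\varepsilon)\right[$ holds uniformly. Moreover, because $T_{i,\lai{i}{n}}=T_i$ does not depend on $n$, the fixed point inclusion \eqref{e:f} collapses to the trivial identity $\Fix(T_0\circ\sum_{i=1}^m\omega_iT_i)\subset\Fix(T_0\circ\sum_{i=1}^m\omega_iT_i)$, which holds by the standing assumption that Problem~\ref{prob:1} has a solution. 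Thus \eqref{e:a2} is exactly \eqref{e:a1} in the autonomous case, and Theorem~\ref{t:1} applies verbatim.

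With the reduction in place, each assertion follows by reading off the corresponding conclusion of the theorem. For \ref{c:1i}, I would invoke Theorem~\ref{t:1}\ref{t:1i}, which in the autonomous setting reads $x_{\lai{i}{n}}-T_ix_{\lai{i}{n}}+T_ix-x\to 0$; combining this with Theorem~\ref{t:1}\ref{t:1iii-}, namely $x_{\lai{i}{n}}-x_n\to 0$, and using the nonexpansiveness (hence continuity) of $T_i$ to pass from the subsequence index $\lai{i}{n}$ back to $n$, yields $x_n-T_ix_n\to x-T_ix$.

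The crux of the corollary is \ref{c:1ii}, and the only substantive point is to discharge the hypothesis of Theorem~\ref{t:1}\ref{t:1iv}, that every weak sequential cluster point of $(x_n)_{n\in\NN}$ solves Problem~\ref{prob:1}. I would argue as follows: let $x^\star$ be a weak sequential cluster point, say $x_{k_n}\weakly x^\star$. Fixing an arbitrary solution $x$, part \ref{c:1i} gives $x_n-T_ix_n\to x-T_ix$ for each $i\in\{1,\ldots,m\}$, and a parallel argument handles the outer operator $T_0$ via \eqref{e:u7}. The weak cluster point then satisfies $x^\star-T_ix^\star=x-T_ix$ for every $i$, by the demiclosedness of $\Id-T_i$ at the appropriate value (each $T_i$ is averaged, hence nonexpansive, so $\Id-T_i$ is demiclosed by \cite[Corollary~4.28]{Livre1}). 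Tracking these relations through the composition $T_0\circ\sum_{i=1}^m\omega_iT_i$ shows that $x^\star$ has the same residual as the solution $x$, forcing $x^\star\in\Fix(T_0\circ\sum_{i=1}^m\omega_iT_i)$. Once this is established, Theorem~\ref{t:1}\ref{t:1iva} delivers weak convergence to a solution. I expect the demiclosedness bookkeeping—correctly aligning the limits of $x_n-T_ix_n$ across the block index $\lai{i}{n}$ and threading them through the outer map $T_0$—to be the main obstacle, since it requires care to ensure all residuals are evaluated at the common weak limit.

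For \ref{c:1iii}, suppose some $T_i$ is demicompact. By \ref{c:1i}, $(x_n-T_ix_n)_{n\in\NN}$ converges and $(x_n)_{n\in\NN}$ is bounded (Theorem~\ref{t:1}\ref{t:1i-}); demicompactness then furnishes a strong sequential cluster point of $(x_n)_{n\in\NN}$, and Theorem~\ref{t:1}\ref{t:1ivb} upgrades the weak convergence of \ref{c:1ii} to strong convergence. Finally, \ref{c:1iv} is immediate from Theorem~\ref{t:1}\ref{t:1vi}: a Banach contraction $T_i$ has a Lipschitz constant $\rho_i<1$, so the hypothesis that some $\rho_i<1$ holds, and linear convergence to the unique solution follows.
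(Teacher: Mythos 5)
Your overall strategy---specializing Theorem~\ref{t:1} to $T_{i,n}=T_i$, noting that \eqref{e:f} becomes trivial and that a uniform $\varepsilon$ can be chosen since the $\alpha_i$ are fixed in $\zeroun$---is exactly the paper's, and your treatments of \ref{c:1i} and \ref{c:1iv} match its proof. For \ref{c:1ii} you take a more laborious route than necessary: the paper simply observes that $T=T_0\circ\sum_{i=1}^m\omega_iT_i$ is nonexpansive, that Theorem~\ref{t:1}\ref{t:1iii} gives $x_n-Tx_n\to 0$, and that a single application of the demiclosedness principle \cite[Corollary~4.28]{Livre1} to the composite $T$ shows that every weak sequential cluster point lies in $\Fix T$. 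Your operator-by-operator version (demiclosedness of each $\Id-T_i$ to get $x^\star-T_ix^\star=x-T_ix$, then demiclosedness of $\Id-T_0$ along $\sum_{i=1}^m\omega_iT_ix_{k_n}$, whose weak limit can be identified with $\sum_{i=1}^m\omega_iT_ix^\star$ only after feeding back the individual relations) can be made to work, but it is precisely the bookkeeping you flag as delicate, and it is avoidable.

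The genuine gap is in \ref{c:1iii} for the case $i=0$. Your argument reads: by \ref{c:1i}, $(x_n-T_ix_n)_{n\in\NN}$ converges, and demicompactness then furnishes a strong sequential cluster point of $(x_n)_{n\in\NN}$. But \ref{c:1i} is asserted (and proved) only for $i\in\{1,\ldots,m\}$; nothing in Theorem~\ref{t:1} gives convergence of $x_n-T_0x_n$, so demicompactness of $T_0$ cannot be applied to $(x_n)_{n\in\NN}$ in this way. The paper handles $i=0$ separately: it shows that $(\Id-T_0)(\sum_{i=1}^m\omega_iT_ix_n)\to\sum_{i=1}^m\omega_iT_ix-x$, applies the demicompactness of $T_0$ to the bounded sequence $(\sum_{i=1}^m\omega_iT_ix_n)_{n\in\NN}$ to extract a strong cluster point of that sequence, transfers it to $(Tx_n)_{n\in\NN}$ by nonexpansiveness of $T_0$, and finally to $(x_n)_{n\in\NN}$ via $x_n-Tx_n\to 0$. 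Without this detour, the $i=0$ case of \ref{c:1iii} is unproved.
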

\begin{proof}
We operate in the special case of Theorem~\ref{t:1} for which
$(\forall n\in\NN)(\forall i\in\{0\}\cup I_n)$ $T_{i,n}=T_i$. 
Set $T=T_0\circ(\sum_{i=1}^m\omega_iT_i)$. Then the set
of solutions to Problem~\ref{prob:1} is $\Fix T$ and 
$T$ is nonexpansive since the operators 
$(T_i)_{0\leq i\leq m}$ are likewise. In addition, we derive from
Theorem~\ref{t:1}\ref{t:1iii} that 
\begin{equation}
\label{e:pg76}
x_n-Tx_n\to 0. 
\end{equation}
Altogether, \cite[Corollary~4.28]{Livre1}
asserts that, if $z\in\HH$ is a weak sequential cluster point of
$(x_n)_{n\in\NN}$, then $z\in\Fix T$. Thus, 
\begin{equation}
\label{e:71}
\text{every weak sequential cluster point of}\;
(x_n)_{n\in\NN}\;\text{solves Problem~\ref{prob:1}}.
\end{equation}
Recall from 
Theorem~\ref{t:1}\ref{t:1i} that 
\begin{equation}
\label{e:pg78}
(\forall x\in\Fix T)(\forall i\in\{1,\ldots,m\})\quad 
x_{\lai{i}{n}}-T_ix_{\lai{i}{n}}\to x-T_ix
\end{equation}
and from Theorem~\ref{t:1}\ref{t:1iii-} that 
\begin{equation}
\label{e:pg77}
(\forall i\in\{1,\ldots,m\})\quad x_{\lai{i}{n}}-x_n\to 0.
\end{equation}

\ref{c:1i}: 
We derive from the nonexpansiveness of $T_i$, \eqref{e:pg78}, and
\eqref{e:pg77} that 
\begin{align}
\label{e:pg80}
\|(\Id-T_i)x_n-(\Id-T_i)x\|
&\leq
\|(\Id-T_i)x_n-(\Id-T_i)x_{\lai{i}{n}}\|
+\|(\Id-T_i)x_{\lai{i}{n}}-(\Id-T_i)x\|
\nonumber\\
&\leq
2\|x_n-x_{\lai{i}{n}}\|
+\|(\Id-T_i)x_{\lai{i}{n}}-(\Id-T_i)x\|\to 0
\nonumber\\
&\to 0.
\end{align}

\ref{c:1ii}: 
This is a consequence of \eqref{e:71} and 
Theorem~\ref{t:1}\ref{t:1iva}.

\ref{c:1iii}:
In view of \eqref{e:71} and Theorem~\ref{t:1}\ref{t:1ivb}, it is
enough to show that $(x_n)_{n\in\NN}$ has a strong sequential
cluster point. 
It follows from \ref{c:1ii} and \cite[Lemma~2.46]{Livre1} that 
$(x_n)_{n\in\NN}$ is bounded. Hence, if $1\leq i\leq m$, we infer 
from \ref{c:1i} and the demicompactness of $T_i$ that
$(x_n)_{n\in\NN}$ has a strong sequential cluster point.
Now suppose that $i=0$ and let $x\in\Fix T$.
Arguing as in \eqref{e:t0_i}, we obtain
\begin{equation}
\label{e:ti8}
(\Id-T_0)\bigg(\Sum_{i=1}^m\omega_iT_ix_{\lai{i}{n}}\bigg)=
\Sum_{i=1}^m\omega_iT_ix_{\lai{i}{n}}
-T_0\bigg(\Sum_{i=1}^m\omega_iT_ix_{\lai{i}{n}}\bigg)
\to\Sum_{i=1}^m\omega_iT_ix-x.
\end{equation}
However, we derive from the nonexpansiveness of the operators 
$(T_i)_{0\leq i\leq m}$ and \eqref{e:pg77} that 
\begin{align}
\label{e:m4}
\bigg\|(\Id-T_0)\bigg(\Sum_{i=1}^m\omega_iT_ix_{n}\bigg)
-(\Id-T_0)\bigg(\Sum_{i=1}^m\omega_iT_ix_{\lai{i}{n}}
\bigg)\bigg\|
&\leq 2\bigg\|\Sum_{i=1}^m\omega_iT_ix_{n}
-\Sum_{i=1}^m\omega_iT_ix_{\lai{i}{n}}
\bigg\|\nonumber\\
&\leq 2\Sum_{i=1}^m\omega_i\bigg
\|T_ix_n-T_ix_{\lai{i}{n}}\bigg\|\nonumber\\
&\leq 2\|x_n-x_{\lai{i}{n}}\|\nonumber\\
&\to 0.
\end{align}
Combining \eqref{e:ti8} and \eqref{e:m4} yields 
\begin{equation}
\label{e:m6}
(\Id-T_0)\bigg(\Sum_{i=1}^m\omega_iT_ix_{n}\bigg)
\to\Sum_{i=1}^m\omega_iT_ix-x.
\end{equation}
Therefore, by demicompactness of $T_0$, the bounded sequence
$(\sum_{i=1}^m\omega_iT_ix_n)_{n\in\NN}$ has a strong
sequential cluster point and so does 
$(Tx_n)_{n\in\NN}=(T_0(\sum_{i=1}^m\omega_iT_ix_n))_{n\in\NN}$ 
since $T_0$ is nonexpansive. Consequently, \eqref{e:pg76} entails
that $(x_n)_{n\in\NN}$ has a strong sequential cluster point. 

\ref{c:1iv}: 
This is a consequence of Theorem~\ref{t:1}\ref{t:1vi}.
\end{proof}

In connection with Corollary~\ref{c:1}\ref{c:1iii}, here are 
examples of demicompact operators.

\begin{example}
\label{ex:7}
Le $T\colon\HH\to\HH$ be a nonexpansive operator. Then $T$ is
demicompact if one of the following holds:
\begin{enumerate}
\setlength{\itemsep}{0pt}
\item
\label{ex:7i}
$\ran T$ is boundedly relatively compact (the intersection of its
closure with every closed ball in $\HH$ is compact).
\item
\label{ex:7ii}
$\ran T$ lies in a finite-dimensional subspace.
\item
\label{ex:7iii}
$T=J_A$, where $A\colon\HH\to 2^{\HH}$ is maximally monotone and 
one of the following is satisfied:
\begin{enumerate}
\item
\label{ex:7iiia}
$A$ is \emph{demiregular} \cite{Sico10}, i.e., for every sequence 
$(x_n,u_n)_{n\in\NN}$ in $\gra A$ and for every $(x,u)\in\gra A$, 
[$x_n\weakly x$ and $u_n\to u$] $\Rightarrow$ $x_n\to x$.
\item
\label{ex:7iiib}
$A$ is uniformly monotone, i.e., there exists an increasing
function $\phi\colon\RP\to\RPX$ vanishing only at $0$ such that 
$(\forall (x,u)\in\gra A)(\forall (y,v)\in\gra A)$
$\scal{x-y}{u-v}\geq\phi(\|x-y\|)$.
\item
\label{ex:7iiic}
$A=\partial f$, where $f\in\Gamma_0(\HH)$ is uniformly convex, 
i.e., there exists an increasing function $\phi\colon\RP\to\RPX$
vanishing only at $0$ such that 
\begin{multline}
\label{e:unifconv}
(\forall\alpha\in\zeroun)(\forall x\in\dom f)(\forall y\in\dom f)\\
f\big(\alpha x+(1-\alpha) y\big)+\alpha(1-\alpha)\phi(\|x-y\|)
\leq \alpha f(x)+(1-\alpha)f(y).
\end{multline}
\item
\label{ex:7iiie}
$A=\partial f$, where $f\in\Gamma_0(\HH)$ and the lower level sets
of $f$ are boundedly compact.
\item
$\dom A$ is boundedly relatively compact.
\item
\label{ex:7iiih}
$A\colon\HH\to\HH$ is single-valued with a single-valued continuous
inverse.
\end{enumerate}
\end{enumerate}
\end{example}
\begin{proof}
Let $(y_n)_{n\in\NN}$ be a bounded sequence in $\HH$ such that
$y_n-Ty_n\to u$, for some $u\in\HH$. Set $(\forall n\in\NN)$
$x_n=Ty_n$. 

\ref{ex:7i}:
By construction, $(x_n)_{n\in\NN}$ lies in $\ran T$ and it
is bounded since $(\forall n\in\NN)$
$\|x_n\|\leq\|Ty_n-Ty_0\|+\|Ty_0\|\leq\|y_n-y_0\|+\|Ty_0\|$. Thus,
$(x_n)_{n\in\NN}$ lies in a compact set and it therefore possesses
a strongly convergent subsequence, say $x_{k_n}\to x\in\HH$. In
turn $y_{k_n}=y_{k_n}-Ty_{k_n}+x_{k_n}\to u+x$.

\ref{ex:7ii}$\Rightarrow$\ref{ex:7i}: Clear.

\ref{ex:7iiia}: Set $(\forall n\in\NN)$ $u_n=y_n-x_n$. Then
$u_n\to u$. In addition, $(\forall n\in\NN)$ $(x_n,u_n)\in\gra A$.
On the other hand, since $(y_n)_{n\in\NN}$ is bounded, we can
extract from it a weakly convergent subsequence, say 
$y_{k_n}\weakly y$. Then $x_{k_n}=y_{k_n}-u_{k_n}\weakly y-u$
and $u_{k_n}\to u$. By demiregularity, we get
$x_{k_n}\to y-u$ and therefore $y_{k_n}=x_{k_n}+u_{k_n}\to y$. 

\ref{ex:7iiib}--\ref{ex:7iiih}: These are special cases of 
\ref{ex:7iiia} \cite[Proposition~2.4]{Sico10}.
\end{proof}

\section{Applications}
\label{sec:4}

We present several applications of Theorem~\ref{t:1} to classical
nonlinear analysis problems which will be seen to reduce to
instantiations of Problem~\ref{prob:1}. These range from common
fixed point and inconsistent feasibility problems to composite
monotone inclusion and minimization problems. In each scenario, the
main benefit of the proposed framework will lie in its ability to
achieve convergence while updating only subgroups of the pool of
operators involved.

\subsection{Finding common fixed point of firmly nonexpansive
operators}

Firmly nonexpansive operators are operators which are 
$1/2$-averaged \cite{Livre1,Goeb84}. This application concerns the
following ubiquitous fixed point problem
\cite{Baus96,Cras95,Dye92a,Flam95,Tsen92}.

\begin{problem}
\label{prob:4}
Let $m$ be a strictly positive integer and, for every 
$i\in\{1,\ldots,m\}$, let $T_i\colon\HH\to\HH$ be
firmly nonexpansive. The task is to find a point in
$\bigcap_{i=1}^m\Fix T_i$.
\end{problem}

\begin{corollary}
\label{c:4}
Consider the setting of Problem~\ref{prob:4} under
Assumption~\ref{a:12} and the assumption that 
$\bigcap_{i=1}^m\Fix T_i\neq\emp$.
Let $(\omega_i)_{1\leq i\leq m}\in\rzeroun^m$ be such
that $\sum_{i=1}^m\omega_i=1$. For every $n\in\NN$ 
and every $i\in I_n$, let $T_{i,n}\colon\HH\to\HH$ be a firmly
nonexpansive operator such that $\Fix T_i\subset\Fix T_{i,n}$. 
Let $x_0\in\HH$, let $(t_{i,-1})_{1\leq i\leq m}\in\HH^m$, 
and iterate
\begin{equation}
\label{e:a17}
\begin{array}{l}
\text{for}\;n=0,1,\ldots\\
\left\lfloor
\begin{array}{l}
\text{for every}\;i\in I_n\\
\left\lfloor
\begin{array}{l}
t_{i,n}=T_{i,n}x_n+e_{i,n}
\end{array}
\right.\\
\text{for every}\;i\in\{1,\ldots,m\}\smallsetminus I_n\\
\left\lfloor
\begin{array}{l}
t_{i,n}=t_{i,n-1}\\
\end{array}
\right.\\
x_{n+1}=\Sum_{i=1}^m\omega_it_{i,n}.
\end{array}
\right.\\
\end{array}
\end{equation}
Then the following hold:
\begin{enumerate}
\item
\label{c:4i-}
Let $i\in \{1,\ldots,m\}$. Then
$(T_{i,\lai{i}{n}}x_{\lai{i}{n}})_{n\in\NN}$ is bounded.
\item
\label{c:4i}
Suppose that, for every $z\in\HH$, every $i\in\{1.\ldots,m\}$, and
every strictly increasing sequence $(k_n)_{n\in\NN}$ of integers
greater than $K$,
\begin{equation}
\label{e:j-p}
\begin{cases}
x_{\lai{i}{k_n}}\weakly z\\
x_{\lai{i}{k_n}}-T_{i,\lai{i}{k_n}}
x_{\lai{i}{k_n}}\to 0
\end{cases}
\quad\Rightarrow\quad z\in\Fix T_i.
\end{equation}
Then $(x_n)_{n\in\NN}$ converges weakly to a solution to
Problem~\ref{prob:4}.
\item
\label{c:4ii}
Suppose that, for some $i\in\{1,\ldots,m\}$, 
$(T_{i,\lai{i}{n}}x_{\lai{i}{n}})_{n\in\NN}$ 
has a strong sequential cluster point. 
Then $(x_n)_{n\in\NN}$ converges strongly to a solution to
Problem~\ref{prob:4}.
\end{enumerate}
\end{corollary}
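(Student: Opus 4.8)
The plan is to recognize Corollary~\ref{c:4} as the specialization of Theorem~\ref{t:1} obtained by taking $T_0=\Id$, which is $\alpha_0$-averaged for any $\alpha_0\in\zeroun$, and by taking $T_{0,n}=\Id$ for every $n$ (so the outer operator introduces no error, $e_{0,n}=0$). With this identification the outer averaged operator disappears, the recursion \eqref{e:a17} becomes exactly \eqref{e:a1}, and the composite fixed point set collapses: since $\Id\circ\sum_{i=1}^m\omega_iT_i=\sum_{i=1}^m\omega_iT_i$ and each $T_i$ is firmly nonexpansive with $\bigcap_{i=1}^m\Fix T_i\neq\emp$, a standard convexity argument for averaged operators gives $\Fix(\sum_{i=1}^m\omega_iT_i)=\bigcap_{i=1}^m\Fix T_i$, so Problem~\ref{prob:4} is the instance of Problem~\ref{prob:1} at hand and it has a solution. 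The firm nonexpansiveness of $T_{i,n}$ means $\alpha_{i,n}=1/2$, which lies in $\left]0,1/(1+\varepsilon)\right[$ for any $\varepsilon\in\left]0,1\right[$, so the averagedness hypothesis of Theorem~\ref{t:1} is met; and the inclusion $\Fix T_i\subset\Fix T_{i,n}$ together with $T_{0,n}=\Id$ furnishes \eqref{e:f}.

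First I would verify \eqref{e:f} carefully: for $x\in\bigcap_{i=1}^m\Fix T_i$ and any $n\geq K-1$, each index $\lai{i}{n}$ satisfies $i\in I_{\lai{i}{n}}$, so $T_{i,\lai{i}{n}}x=x$ by the hypothesis $\Fix T_i\subset\Fix T_{i,\lai{i}{n}}$, whence $\sum_{i=1}^m\omega_iT_{i,\lai{i}{n}}x=x$ and $T_{0,n}(\,\cdot\,)=\Id$ fixes it. With Theorem~\ref{t:1} applicable, part \ref{c:4i-} is immediate from Theorem~\ref{t:1}\ref{t:1i-} (boundedness of $(x_n)$) combined with the bound $\|T_{i,\lai{i}{n}}x_{\lai{i}{n}}\|\leq\|x_{\lai{i}{n}}-x\|+\|T_{i,\lai{i}{n}}x\|=\|x_{\lai{i}{n}}-x\|+\|x\|$, using $T_{i,\lai{i}{n}}x=x$ and nonexpansiveness.

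For \ref{c:4i} the route is to invoke Theorem~\ref{t:1}\ref{t:1iva}, which requires that every weak sequential cluster point of $(x_n)$ solves Problem~\ref{prob:1}. This is where the demiclosedness-type hypothesis \eqref{e:j-p} enters. Given a weak cluster point $z$, I would pass to a subsequence $x_{k_n}\weakly z$; by Theorem~\ref{t:1}\ref{t:1iii-} we have $x_{\lai{i}{k_n}}-x_{k_n}\to 0$, hence $x_{\lai{i}{k_n}}\weakly z$ for each $i$, and by Theorem~\ref{t:1}\ref{t:1i} together with $T_{i,\lai{i}{n}}x=x$ we get $x_{\lai{i}{n}}-T_{i,\lai{i}{n}}x_{\lai{i}{n}}\to 0$. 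Then \eqref{e:j-p} yields $z\in\Fix T_i$ for every $i$, so $z\in\bigcap_{i=1}^m\Fix T_i$ solves Problem~\ref{prob:4}, and weak convergence follows. For \ref{c:4ii}, a strong cluster point of $(T_{i,\lai{i}{n}}x_{\lai{i}{n}})$ transfers to a strong cluster point of $(x_n)$ via $x_{\lai{i}{n}}-T_{i,\lai{i}{n}}x_{\lai{i}{n}}\to 0$ and $x_{\lai{i}{n}}-x_n\to 0$; one then applies Theorem~\ref{t:1}\ref{t:1ivb}.

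The main obstacle I anticipate is the bookkeeping in \ref{c:4i}: one must align the subsequence extraction with the index map $\lai{i}{\cdot}$ so that the hypothesis \eqref{e:j-p}, which is phrased along sequences $x_{\lai{i}{k_n}}$, can be fed exactly the data produced by Theorem~\ref{t:1}, namely the weak convergence $x_{\lai{i}{k_n}}\weakly z$ and the residual convergence $x_{\lai{i}{k_n}}-T_{i,\lai{i}{k_n}}x_{\lai{i}{k_n}}\to 0$. Everything else is a transcription of Theorem~\ref{t:1} under the substitution $T_0=T_{0,n}=\Id$.
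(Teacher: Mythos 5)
Your proposal is correct and follows essentially the same route as the paper's proof: set $T_0=T_{0,n}=\Id$, identify $\Fix(\sum_{i=1}^m\omega_iT_i)=\bigcap_{i=1}^m\Fix T_i$ (the paper cites \cite[Proposition~4.47]{Livre1} for your ``standard convexity argument''), verify \eqref{e:f} from $\Fix T_i\subset\Fix T_{i,\lai{i}{n}}$, and then read off \ref{c:4i-}--\ref{c:4ii} from Theorem~\ref{t:1}\ref{t:1i-}, \ref{t:1i}, \ref{t:1iii-}, \ref{t:1iva}, and \ref{t:1ivb} exactly as you describe. The only cosmetic difference is in \ref{c:4i-}, where the paper bounds $T_{i,\lai{i}{n}}x_{\lai{i}{n}}$ via the convergent residual $x_{\lai{i}{n}}-T_{i,\lai{i}{n}}x_{\lai{i}{n}}$ rather than via nonexpansiveness at $x$; both are valid.
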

\begin{proof}
Set $T_0=\Id$ and $(\forall i\in\{1,\ldots,m\})$ $\alpha_i=1/2$.
In addition, set $(\forall n\in\NN)$ $T_{0,n}=\Id$.
By assumption, for every 
$i\in\{1,\ldots,m\}$ and every integer $n\geq K-1$, 
$\Fix T_i\subset\Fix T_{i,\lai{i}{n}}$. Therefore, it
follows from \cite[Proposition~4.47]{Livre1} that, for every
integer $n\geq K-1$, 
\begin{equation}
\label{e:7y}
\Fix\Bigg(T_0\circ\sum_{i=1}^m\omega_iT_i\Bigg)=
\bigcap_{i=1}^m\Fix T_i
\subset\bigcap_{i=1}^m\Fix T_{i,\lai{i}{n}}=
\Fix\Bigg(T_{0,n}\circ\sum_{i=1}^m\omega_i
T_{i,\lai{i}{n}}\Bigg).
\end{equation}
This shows that \eqref{e:f} holds, that Problem~\ref{prob:4} is a
special case of Problem~\ref{prob:1}, and that \eqref{e:a17} is a
special case of \eqref{e:a1}. Let us derive the claims from
Theorem~\ref{t:1}. First, let $x\in\bigcap_{i=1}^m\Fix T_i$. Then,
for every $i\in\{1,\ldots,m\}$ and every integer $n\geq K-1$,
$x\in\Fix T_i\subset\Fix T_{i,\lai{i}{n}}$. This allows us to
deduce from Theorem~\ref{t:1}\ref{t:1i} that
\begin{equation}
\label{e:j-p2}
(\forall i\in\{1,\ldots,m\})\quad
x_{\lai{i}{n}}-T_{i,\lai{i}{n}}x_{\lai{i}{n}}
=x_{\lai{i}{n}}-T_{i,\lai{i}{n}}x_{\lai{i}{n}}
+T_{i,\lai{i}{n}}x-x\to 0.
\end{equation}
We also recall from Theorem~\ref{t:1}\ref{t:1iii-} that
\begin{equation}
\label{e:j-p3}
(\forall i\in\{1,\ldots,m\})\quad x_{\lai{i}{n}}-x_{n}\to 0.
\end{equation}

\ref{c:4i-}: 
This follows from Theorem~\ref{t:1}\ref{t:1i-}, \eqref{e:j-p2}, 
and \eqref{e:j-p3}.

\ref{c:4i}: Let $i\in\{1,\ldots,m\}$ and let $z\in\HH$ be a weak 
sequential cluster point of $(x_n)_{n\in\NN}$, say 
$x_{k_n}\weakly z$. In view of Theorem~\ref{t:1}\ref{t:1iva}, it
is enough to show that $z\in\Fix T_i$. We derive from
\eqref{e:j-p2} that $x_{\lai{i}{k_n}}-
T_{i,\lai{i}{k_n}}x_{\lai{i}{k_n}}\to 0$. On the other
hand, \eqref{e:j-p3} yields
$x_{\lai{i}{k_n}}=(x_{\lai{i}{k_n}}-x_{k_n})
+x_{k_n}\weakly z$. Using \eqref{e:j-p}, we obtain
$z\in\Fix T_i$.

\ref{c:4ii}: 
Let $z\in\HH$ be a strong sequential cluster point of 
$(T_{i,\lai{i}{n}}x_{\lai{i}{n}})_{n\in\NN}$, 
say $T_{i,\lai{i}{k_n}}x_{\lai{i}{k_n}}\to z$. 
Then \eqref{e:j-p2} yields $x_{\lai{i}{k_n}}\to z$. In turn,
\eqref{e:j-p3} implies that $x_{k_n}\to z$ and the conclusion
follows from Theorem~\ref{t:1}\ref{t:1ivb}. 
\end{proof}

\begin{example}
\label{ex:81}
We revisit a problem investigated in \cite{Sico04}. 
Let $m$ be a strictly positive integer, let
$(\omega_i)_{1\leq i\leq m}\in\rzeroun^m$ be such that
$\sum_{i=1}^m\omega_i=1$, and, for every $i\in\{1,\ldots,m\}$, let
$\rho_i\in\RP$ and let $A_i\colon\HH\to 2^{\HH}$ be maximally 
$\rho_i$-cohypomonotone in the sense that $A_i^{-1}+\rho_i\Id$ 
is maximally monotone. The task is to
\begin{equation}
\label{e:tp1}
\text{find}\;\;x\in\HH\quad\text{such that}\quad
(\forall i\in\{1,\ldots,m\})\quad 0\in A_ix,
\end{equation}
under the assumption that such a point exists. 
Suppose that Assumption~\ref{a:12} is satisfied,
let $\varepsilon\in\zeroun$, let $x_0\in\HH$, 
let $(t_{i,-1})_{1\leq i\leq m}\in\HH^m$, and let
$(\forall n\in\NN)(\forall i\in I_n)$
$\gamma_{i,n}\in\left[\rho_i+\varepsilon,\pinf\right[$.
Iterate
\begin{equation}
\label{e:a19}
\begin{array}{l}
\text{for}\;n=0,1,\ldots\\
\left\lfloor
\begin{array}{l}
\text{for every}\;i\in I_n\\
\left\lfloor
\begin{array}{l}
t_{i,n}=x_n+\big(1-{\rho_i}/{\gamma_{i,n}}\big)\big(
J_{\gamma_{i,n}A_i}x_n+e_{i,n}-x_n\big)
\end{array}
\right.\\
\text{for every}\;i\in\{1,\ldots,m\}\smallsetminus I_n\\
\left\lfloor
\begin{array}{l}
t_{i,n}=t_{i,n-1}\\
\end{array}
\right.\\
x_{n+1}=\Sum_{i=1}^m\omega_it_{i,n}.
\end{array}
\right.\\
\end{array}
\end{equation}
Then the following hold:
\begin{enumerate}
\setlength{\itemsep}{0pt}
\item
\label{ex:72i}
$(x_n)_{n\in\NN}$ converges weakly to a solution to 
\eqref{e:tp1}. 
\item
\label{ex:72ii}
Suppose that, for some $i\in\{1,\ldots,m\}$, $\dom A_i$ is 
boundedly relatively compact. Then $(x_n)_{n\in\NN}$ converges
strongly to a solution to \eqref{e:tp1}. 
\end{enumerate}
\end{example}
\begin{proof}
Set
\begin{equation}
\label{e:tp2}
(\forall i\in\{1,\ldots,m\})\quad
\begin{cases}
T_i=\Id+\bigg(1-\dfrac{\rho_i}{\gamma_i}\bigg)\big(
J_{\gamma_iA_i}-\Id\big),\quad\text{where}\quad
\gamma_i\in\left]\rho_i,\pinf\right[\\[3mm]
M_i=(A_i^{-1}+\rho_i\Id)^{-1}.
\end{cases}
\end{equation}
Then it follows from \cite[Proposition~20.22]{Livre1} that the
operators $(M_i)_{1\leq i\leq m}$ are maximally monotone and
therefore from \cite[Lemma~2.4]{Sico04} and
\cite[Corollary~23.9]{Livre1} that
\begin{equation}
\label{e:tp12}
(\forall i\in\{1,\ldots,m\})\quad
T_i=J_{(\gamma_i-\rho_i)M_i}
\;\text{is firmly nonexpansive and}\;\Fix T_i=\zer M_i=\zer A_i,
\end{equation}
which makes \eqref{e:tp1} an instantiation of
Problem~\ref{prob:4}. Now set
\begin{equation}
\label{e:tp3}
(\forall n\in\NN)(\forall i\in I_n)\quad 
T_{i,n}=\Id+\bigg(1-\dfrac{\rho_i}{\gamma_{i,n}}\bigg)
\big(J_{\gamma_{i,n}A_i}-\Id\big)
\quad\text{and}\quad
e_{i,n}'=\bigg(1-\dfrac{\rho_i}{\gamma_{i,n}}\bigg)e_{i,n}.
\end{equation}
Then $(\forall i\in\{1,\ldots,m\})$ 
$\sum_{n\geq K-1}\|e_{i,\lai{i}{n}}'\|
\leq\sum_{n\geq K-1}\|e_{i,\lai{i}{n}}\|<\pinf$. In addition,
$(\forall n\in\NN)(\forall i\in I_n)$ 
$t_{i,n}=T_{i,n}x_n+e_{i,n}'$.
This places \eqref{e:a19} in the same operating conditions as
\eqref{e:a17}. We also derive from \cite[Lemma~2.4]{Sico04} that
\begin{equation}
\label{e:tp9}
(\forall n\in\NN)(\forall i\in I_n)\quad 
T_{i,n}=J_{(\gamma_{i,n}-\rho_i)M_i}
\;\text{is firmly nonexpansive and}
\;\Fix T_{i,n}=\zer M_i=\zer A_i.
\end{equation}
\ref{ex:72i}:
In view of Corollary~\ref{c:4}\ref{c:4i}, it suffices to 
check that condition \eqref{e:j-p} holds. Let us take
$z\in\HH$, $i\in\{1,\ldots,m\}$, and a strictly increasing 
sequence $(k_n)_{n\in\NN}$ of integers greater than $K$ such that
\begin{equation}
\label{e:tp5}
x_{\lai{i}{k_n}}\weakly z\quad\text{and}\quad
x_{\lai{i}{k_n}}-T_{i,\lai{i}{k_n}}x_{\lai{i}{k_n}}\to 0.
\end{equation}
Then we must show that $0\in A_iz$. Note that
\begin{equation}
\label{e:tp15}
T_{i,\lai{i}{k_n}}x_{\lai{i}{k_n}}\weakly z.
\end{equation}
Now set 
\begin{equation}
\label{e:tp7}
(\forall n\in\NN)\quad
u_{\lai{i}{k_n}}=(\gamma_{i,\lai{i}{k_n}}-\rho_i)^{-1}
\big(x_{\lai{i}{k_n}}-T_{i,\lai{i}{k_n}}x_{\lai{i}{k_n}}\big).
\end{equation}
Then 
\begin{equation}
\label{e:tp16}
\|u_{\lai{i}{k_n}}\|
=\dfrac{\|x_{\lai{i}{k_n}}-
T_{i,\lai{i}{k_n}}x_{\lai{i}{k_n}}\|}
{\gamma_{i,\lai{i}{k_n}}-\rho_i}
\leq\dfrac{\|x_{\lai{i}{k_n}}-
T_{i,\lai{i}{k_n}}x_{\lai{i}{k_n}}\|}
{\varepsilon}\to 0.
\end{equation}
On the other hand, we derive from \eqref{e:tp9} that 
$(\forall n\in\NN)$
$T_{i,\lai{i}{k_n}}=J_{(\gamma_{i,\lai{i}{k_n}}-\rho_i)M_i}$.
Therefore, \eqref{e:tp7} yields
\begin{equation}
\label{e:tp4}
(\forall n\in\NN)\quad \big(T_{i,\lai{i}{k_n}}x_{\lai{i}{k_n}},
u_{\lai{i}{k_n}}\big)\in\gra M_i.
\end{equation}
However, since $M_i$ is maximally monotone, $\gra M_i$ is 
sequentially closed in 
$\HH^{\operatorname{weak}}\times\HH^{\operatorname{strong}}$
\cite[Proposition~20.38(ii)]{Livre1}. Hence, \eqref{e:tp15}, 
\eqref{e:tp16}, and \eqref{e:tp4} imply that 
$z\in\zer M_i=\zer A_i$.

\ref{ex:72ii}: By \eqref{e:tp9}, for every $n\geq K-1$,
$T_{i,\lai{i}{n}}x_{\lai{i}{n}}\in\ran
T_{i,\lai{i}{n}}=\dom(\Id+(\gamma_{i,\lai{i}{n}}-\rho_i)M_i)
=\dom M_i$. However, Corollary~\ref{c:4}\ref{c:4i-} asserts that
$(T_{i,\lai{i}{n}}x_{\lai{i}{n}})_{n\in\NN}$ lies in a closed
ball. Altogether, it possesses a strong sequential cluster point
and the conclusion follows from Corollary~\ref{c:4}\ref{c:4ii}.
\end{proof}

\begin{remark}
\label{r:tp}
Suppose that, in Example~\ref{ex:81}, the operators 
$(A_i)_{1\leq i\leq m}$ are maximally monotone, i.e.,
$(\forall i\in\{1,\ldots,m\})$ $\rho_i=0$. Suppose that, in
addition, all the operators are used at each iteration, i.e.,
$(\forall n\in\NN)$ $I_n=\{1,\ldots,m\}$. Then the implementation
of \eqref{e:a19} with no errors reduces to the barycentric 
proximal method of \cite{Lehd99}.
\end{remark}

\begin{example}
\label{ex:82}
As shown in \cite{Eusi20}, many problems in data science and
harmonic analysis can be cast as follows. Let $m$ be a strictly 
positive integer and, for every $i\in\{1,\ldots,m\}$, let 
$R_i\colon\HH\to\HH$ be firmly
nonexpansive and let $r_i\in\HH$. The task is to 
\begin{equation}
\label{e:z1}
\text{find}\;\;x\in\HH\quad\text{such that}\quad
(\forall i\in\{1,\ldots,m\})\quad r_i=R_ix,
\end{equation}
under the assumption that such a point exists. 
Let $(\omega_i)_{1\leq i\leq m}\in\rzeroun^m$ be such that
$\sum_{i=1}^m\omega_i=1$, suppose that Assumption~\ref{a:12} is
satisfied, let $x_0\in\HH$, and let 
$(t_{i,-1})_{1\leq i\leq m}\in\HH^m$.
Iterate
\begin{equation}
\label{e:a18}
\begin{array}{l}
\text{for}\;n=0,1,\ldots\\
\left\lfloor
\begin{array}{l}
\text{for every}\;i\in I_n\\
\left\lfloor
\begin{array}{l}
t_{i,n}=r_i+x_n-R_ix_n+e_{i,n}
\end{array}
\right.\\
\text{for every}\;i\in\{1,\ldots,m\}\smallsetminus I_n\\
\left\lfloor
\begin{array}{l}
t_{i,n}=t_{i,n-1}\\
\end{array}
\right.\\
x_{n+1}=\Sum_{i=1}^m\omega_it_{i,n}.
\end{array}
\right.\\
\end{array}
\end{equation}
Then the following hold:
\begin{enumerate}
\setlength{\itemsep}{0pt}
\item
\label{ex:82i}
$(x_n)_{n\in\NN}$ converges weakly to a solution to 
\eqref{e:z1}. 
\item
\label{ex:82ii}
Suppose that, for some $i\in\{1,\ldots,m\}$, $\Id-R_i$ is
demicompact.
Then $(x_n)_{n\in\NN}$ converges strongly to a solution to 
\eqref{e:z1}. 
\end{enumerate}
\end{example}
\begin{proof}
Following \cite{Eusi20}, \eqref{e:z1} can be formulated as an
instance of Problem~\ref{prob:4}, by choosing
$(\forall i\in\{1,\ldots,m\})$ $T_i=r_i+\Id-R_i$. A
straightforward implementation of \eqref{e:a17} consists of
setting $(\forall n\in\NN)(\forall i\in I_n)$ 
$T_{i,n}=T_i$, which reduces \eqref{e:a17} to \eqref{e:a18}.

\ref{ex:82i}:
Since the operators $(T_i)_{1\leq i\leq m}$ are nonexpansive,
\cite[Theorem~4.27]{Livre1} asserts that the operators 
$(\Id-T_i)_{1\leq i\leq m}$ are demiclosed, which implies that
condition \eqref{e:j-p} holds. Thus, the claim follows from 
Corollary~\ref{c:4}\ref{c:4i}.

\ref{ex:82ii}: 
We deduce from \eqref{e:j-p2} that 
$x_{\lai{i}{n}}-T_ix_{\lai{i}{n}}\to 0$, and from 
\eqref{e:j-p3} and \ref{ex:82i} that
$(x_{\lai{i}{n}})_{n\in\NN}$ is bounded. Hence, 
since $T_i$ is demicompact,
$(x_{\lai{i}{n}})_{n\in\NN}$ has a strong sequential cluster 
point and so does $(T_ix_{\lai{i}{n}})_{n\in\NN}$. We conclude
with Corollary~\ref{c:4}\ref{c:4ii}.
\end{proof}

\begin{remark}
\label{r:i}
If \eqref{e:z1} has no solution, \eqref{e:a18} will produce a fixed
point of the operator
$\sum_{i=1}^m\omega_iT_i=\Id+\sum_{i=1}^m\omega_i(r_i-R_i)$,
provided one exists. As discussed in \cite{Eusi20}, this is a valid
relaxation of \eqref{e:z1}.
\end{remark}

\subsection{Forward-backward operator splitting}

We consider the following monotone inclusion problem.

\begin{problem}
\label{prob:2}
Let $m$ be a strictly positive integer and let
$(\omega_i)_{1\leq i\leq m}\in\rzeroun^m$ be such that
$\sum_{i=1}^m\omega_i=1$. Let $A_0\colon\HH\to 2^{\HH}$ be 
maximally monotone and, for every $i\in\{1,\ldots,m\}$, 
let $\beta_i\in\RPP$ and let $A_i\colon\HH\to\HH$ be 
$\beta_i$-cocoercive, i.e.,
\begin{equation}
(\forall x\in\HH)(\forall y\in\HH)\quad
\scal{x-y}{A_ix-A_iy}\geq\beta_i\|A_ix-A_iy\|^2.
\end{equation}
The task is to find $x\in\HH$ such that 
$0\in A_0x+\sum_{i=1}^m\omega_i A_ix$. 
\end{problem}

\begin{remark}
\label{r:vi}
In Problem~\ref{prob:2}, suppose that $A_0$ is the normal cone
operator of a nonempty closed convex set $C$, i.e.,
$A_0=\partial\iota_C$. Then the problem is to solve the
variational inequality
\begin{equation}
\text{find}\;\;x\in C\quad\text{such that}\quad
(\forall y\in\HH)\quad\Scal{x-y}{\sum_{i=1}^m\omega_i A_ix}\leq 0.
\end{equation}
\end{remark}

If $m=1$, a standard method for solving Problem~\ref{prob:2} is the
forward-backward splitting algorithm \cite{Opti04,Tsen91,Bang13}. 
We propose below a multi-operator version of it with block-updates.

\begin{proposition}
\label{p:2}
Consider the setting of Problem~\ref{prob:2} under
Assumption~\ref{a:12} and the assumption that it has
a solution. Let 
$\gamma\in\left]0,2\min_{1\leq i\leq m}\beta_i\right[$,
let $x_0\in\HH$, 
let $(t_{i,-1})_{1\leq i\leq m}\in\HH^m$, and iterate
\begin{equation}
\label{e:a3}
\begin{array}{l}
\text{for}\;n=0,1,\ldots\\
\left\lfloor
\begin{array}{l}
\text{for every}\;i\in I_n\\
\left\lfloor
\begin{array}{l}
t_{i,n}=x_n-\gamma(A_ix_n+e_{i,n})\\
\end{array}
\right.\\
\text{for every}\;i\in\{1,\ldots,m\}\smallsetminus I_n\\
\left\lfloor
\begin{array}{l}
t_{i,n}=t_{i,n-1}\\
\end{array}
\right.\\[1mm]
x_{n+1}=J_{\gamma A_0}\Bigg(\Sum_{i=1}^m\omega_it_{i,n}\Bigg)
+e_{0,n}.
\end{array}
\right.\\
\end{array}
\end{equation}
Then the following hold:
\begin{enumerate}
\setlength{\itemsep}{0pt}
\item 
\label{p:2i}
Let $x$ be a solution to Problem~\ref{prob:2} and let
$i\in\{1,\ldots,m\}$. Then $A_ix_n\to A_ix$.
\item 
\label{p:2ii}
$(x_n)_{n\in\NN}$ converges weakly to a solution to
Problem~\ref{prob:2}.
\item 
\label{p:2iii}
Suppose that, for some $i\in\{0,\ldots,m\}$, $A_i$ is 
demiregular. Then $(x_n)_{n\in\NN}$ converges strongly to 
a solution to Problem~\ref{prob:2}.
\item
\label{p:2iv}
Suppose that, for some $i\in\{0,\ldots,m\}$, $A_i$ is strongly
monotone. Then $(x_n)_{n\in\NN}$ converges linearly to the unique 
solution to Problem~\ref{prob:2}.
\end{enumerate}
\end{proposition}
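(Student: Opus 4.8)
The plan is to realize Problem~\ref{prob:2} as an instance of Problem~\ref{prob:1} and read off the four assertions from Corollary~\ref{c:1}. Set $T_0=J_{\gamma A_0}$ and, for every $i\in\{1,\ldots,m\}$, $T_i=\Id-\gamma A_i$. Then $T_0$ is $1/2$-averaged because $A_0$ is maximally monotone, and, since $A_i$ is $\beta_i$-cocoercive and $0<\gamma<2\min_{1\le i\le m}\beta_i\le 2\beta_i$, the operator $T_i$ is $\gamma/(2\beta_i)$-averaged \cite{Livre1}. Because $\sum_{i=1}^m\omega_i=1$, we have $\sum_{i=1}^m\omega_iT_i=\Id-\gamma\sum_{i=1}^m\omega_iA_i$, so $x\in\Fix(T_0\circ\sum_{i=1}^m\omega_iT_i)$ if and only if $x=J_{\gamma A_0}(x-\gamma\sum_{i=1}^m\omega_iA_ix)$, that is, $0\in A_0x+\sum_{i=1}^m\omega_iA_ix$; hence the solution set of Problem~\ref{prob:2} equals $\Fix(T_0\circ\sum_{i=1}^m\omega_iT_i)$. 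Writing $e_{i,n}'=-\gamma e_{i,n}$, the update in \eqref{e:a3} becomes $t_{i,n}=x_n-\gamma(A_ix_n+e_{i,n})=T_ix_n+e_{i,n}'$, which is the update of \eqref{e:a2}, and $\sum_{n\ge K-1}\|e_{i,\lai{i}{n}}'\|=\gamma\sum_{n\ge K-1}\|e_{i,\lai{i}{n}}\|<\pinf$ preserves Assumption~\ref{a:12}; so Corollary~\ref{c:1} applies. Assertion \ref{p:2i} then follows since $x_n-T_ix_n=\gamma A_ix_n$ and $x-T_ix=\gamma A_ix$, so $A_ix_n\to A_ix$ by Corollary~\ref{c:1}\ref{c:1i}, and \ref{p:2ii} is Corollary~\ref{c:1}\ref{c:1ii} verbatim.

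For \ref{p:2iii}, \ref{p:2ii} provides a solution $\bar x$ with $x_n\weakly\bar x$. If the demiregular operator is $A_i$ with $i\ge 1$, I would combine $x_n\weakly\bar x$ with the convergence $A_ix_n\to A_i\bar x$ from \ref{p:2i} and the fact that $(x_n,A_ix_n)$ and $(\bar x,A_i\bar x)$ lie in $\gra A_i$ (as $A_i$ is single-valued) to obtain $x_n\to\bar x$ directly from demiregularity. If $i=0$, I would set $y_n=J_{\gamma A_0}(\sum_{i=1}^m\omega_iT_ix_n)$, so that $u_n:=\gamma^{-1}(\sum_{i=1}^m\omega_iT_ix_n-y_n)\in A_0y_n$; Theorem~\ref{t:1}\ref{t:1iii} gives $x_n-y_n\to 0$, whence $y_n\weakly\bar x$, while \ref{p:2i} gives $u_n\to-\sum_{i=1}^m\omega_iA_i\bar x$, which belongs to $A_0\bar x$ because $\bar x$ solves the problem; demiregularity of $A_0$ then forces $y_n\to\bar x$ and hence $x_n\to\bar x$. (Alternatively, for $i=0$ one may observe that $T_0=J_{\gamma A_0}$ is demicompact by Example~\ref{ex:7}\ref{ex:7iii}\ref{ex:7iiia} applied to $\gamma A_0$, and invoke Corollary~\ref{c:1}\ref{c:1iii}.)

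Assertion \ref{p:2iv} will follow from Corollary~\ref{c:1}\ref{c:1iv} once the strongly monotone $A_i$ is shown to make $T_i$ a Banach contraction. If $A_0$ is $\sigma_0$-strongly monotone, then $J_{\gamma A_0}$ is $1/(1+\gamma\sigma_0)$-Lipschitz, hence a contraction. I expect the main obstacle to be the case $i\ge 1$: the crude Lipschitz estimate on $A_i$ is too weak, and one must instead invoke cocoercivity to bound $\|A_ix-A_iy\|^2\le\beta_i^{-1}\scal{x-y}{A_ix-A_iy}$ within the expansion of the squared norm, yielding
\[
\|(\Id-\gamma A_i)x-(\Id-\gamma A_i)y\|^2
\le\big(1-\gamma(2-\gamma/\beta_i)\,\sigma_i\big)\|x-y\|^2,
\]
whose multiplicative factor lies in $\lzeroun$ precisely because $0<\gamma<2\beta_i$ and $\sigma_i>0$. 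In either case some $T_i$ is a Banach contraction, so Corollary~\ref{c:1}\ref{c:1iv} delivers linear convergence to the (thereby unique) solution.
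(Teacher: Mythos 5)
Your proposal is correct and follows essentially the same route as the paper: the same reduction to Corollary~\ref{c:1} via $T_0=J_{\gamma A_0}$ and $T_i=\Id-\gamma A_i$, with \ref{p:2i}, \ref{p:2ii}, and \ref{p:2iv} read off exactly as the paper does (in \ref{p:2iv} you merely write out the contraction estimate that the paper cites from \cite[Proposition~26.16]{Livre1}). The only variation is in \ref{p:2iii}, where you apply demiregularity directly to the weakly convergent iterates (using \ref{p:2i} for $i\geq 1$ and a resolvent identity plus Theorem~\ref{t:1}\ref{t:1iii} for $i=0$) instead of first verifying that $T_i$ is demicompact and invoking Corollary~\ref{c:1}\ref{c:1iii} as the paper does --- a cosmetic difference, and you note the paper's route as an alternative anyway.
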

\begin{proof}
We apply Corollary~\ref{c:1} with $T_0=J_{\gamma A_0}$ and
$(\forall i\in\{1,\ldots,m\})$ $T_i=\Id-\gamma A_i$. It follows 
from \cite[Proposition~4.39 and Corollary~23.9]{Livre1} that the
operators $(T_i)_{0\leq i\leq m}$ are averaged, and hence from
\cite[Proposition~26.1(iv)(a)]{Livre1} that 
Problem~\ref{prob:2} coincides with Problem~\ref{prob:1}.
In addition, \eqref{e:a3} is an instance of \eqref{e:a2}. 

\ref{p:2i}:
See Corollary~\ref{c:1}\ref{c:1i}.

\ref{p:2ii}:
See Corollary~\ref{c:1}\ref{c:1ii}.

\ref{p:2iii}:
This follows from Corollary~\ref{c:1}\ref{c:1iii}. Indeed, 
if $i=0$, the demicompactness of $T_i$ follows from 
Example~\ref{ex:7}\ref{ex:7iiia}. On the other hand, if $i\neq 0$, 
take a bounded sequence $(y_n)_{n\in\NN}$ in $\HH$ such that 
$(y_n-T_iy_n)_{n\in\NN}$ converges, say $y_n-T_iy_n\to u$.
Then $A_iy_n\to u/\gamma$. On the other hand, 
$(y_n)_{n\in\NN}$ has a weak sequential cluster point, say
$y_{k_n}\weakly y$. So by demiregularity of $A_i$, 
$y_{k_n}\to y$, which shows that $T_i$ is demicompact.

\ref{p:2iv}:
If $i=0$, we derive from \cite[Proposition~23.13]{Livre1} that
$T_0=J_{\gamma A_0}$ is a Banach contraction. If $i\neq 0$, 
as in the proof of \cite[Proposition~26.16]{Livre1}, we obtain that
$T_i=\Id-\gamma A_i$ is a Banach contraction.
The conclusion follows from Corollary~\ref{c:1}\ref{c:1iv}.
\end{proof}

\begin{example}
\label{ex:c}
Consider maximally operators $A_0\colon\HH\to 2^{\HH}$ and,
for every $i\in\{1,\ldots,m\}$, $B_i\colon\HH\to 2^{\HH}$. 
The associated common zero problem is \cite{Else01,Lehd99,Zasl12} 
\begin{equation}
\label{e:cras1995}
\text{find}\;\;{x}\in\HH\;\;\text{such that}\;\;
0\in A_0{x}\cap\bigcap_{i=1}^mB_i{x}.
\end{equation}
As shown in \cite{Siop13}, when \eqref{e:cras1995} has no 
solution, a suitable relaxation is 
\begin{equation}
\label{e:p8}
\text{find}\;\;x\in\HH\quad\text{such that}\quad
0\in A_0x+\sum_{i=1}^m\omega_i(B_i\infconv C_i)x
\end{equation}
where, for every $i\in\{1,\ldots,m\}$, $C_i\colon\HH\to 2^{\HH}$ is
such that $C_i^{-1}$ is at most single-valued and strictly 
monotone, with $C_i^{-1} 0=\{0\}$. In this setting, if
\eqref{e:cras1995} happens to have solutions, they coincide with
those of \eqref{e:p8} \cite{Siop13}. Let us consider the
particular instance in which, for every $i\in \{1,\ldots,m\}$, 
$C_i$ is cocoercive, and set $A_i=B_i\infconv C_i$. Then the 
operators $(C_i^{-1})_{1\leq i\leq m}$ are strongly monotone and,
therefore, the operators $(A_i)_{1\leq i\leq m}$ are cocoercive. In
addition, \eqref{e:p8} is a special case of Problem~\ref{prob:2},
which can be solved via Proposition~\ref{p:2}. Let us observe that
if we further specialize by setting, for every $i\in\{1,\ldots,m\}$,
$C_i=\rho_i^{-1}\Id$ for some $\rho_i\in\RPP$, then \eqref{e:p8}
reduces to \eqref{e:11}. 
\end{example}

We now focus on minimization problems.

\begin{problem}
\label{prob:3}
Let $m$ be a strictly positive integer and let
$(\omega_i)_{1\leq i\leq m}\in\rzeroun^m$ be such that
$\sum_{i=1}^m\omega_i=1$. Let $f_0\in\Gamma_0(\HH)$ 
and, for every $i\in\{1,\ldots,m\}$, 
let $\beta_i\in\RPP$ and let $f_i\colon\HH\to\RR$ be 
a differentiable convex function with a $1/\beta_i$-Lipschitzian
gradient. The task is to 
\begin{equation}
\label{e:prob3}
\minimize{x\in\HH}{f_0(x)+\sum_{i=1}^m\omega_i f_i(x)}. 
\end{equation}
\end{problem}

\begin{proposition}
\label{p:3}
Consider the setting of Problem~\ref{prob:3} under
Assumption~\ref{a:12} and assume that
\begin{equation}
\label{e:exi}
\lim_{\substack{x\in\HH\\ \|x\|\to\pinf}}
\bigg(f_0(x)+\sum_{i=1}^m\omega_i f_i(x)\bigg)=\pinf.
\end{equation}
Let $\gamma\in\left]0,2\min_{1\leq i\leq m}\beta_i\right[$,
let $x_0\in\HH$, let $(t_{i,-1})_{1\leq i\leq m}\in\HH^m$, and 
iterate
\begin{equation}
\label{e:a4}
\begin{array}{l}
\text{for}\;n=0,1,\ldots\\
\left\lfloor
\begin{array}{l}
\text{for every}\;i\in I_n\\
\left\lfloor
\begin{array}{l}
t_{i,n}=x_n-\gamma(\nabla f_i(x_n)+e_{i,n})\\
\end{array}
\right.\\
\text{for every}\;i\in\{1,\ldots,m\}\smallsetminus I_n\\
\left\lfloor
\begin{array}{l}
t_{i,n}=t_{i,n-1}\\
\end{array}
\right.\\[1mm]
x_{n+1}=\prox_{\gamma f_0}\Bigg(\Sum_{i=1}^m\omega_it_{i,n}\Bigg)
+e_{0,n}.
\end{array}
\right.\\
\end{array}
\end{equation}
Then the following hold:
\begin{enumerate}
\setlength{\itemsep}{0pt}
\item 
\label{p:3i}
Let $x$ be a solution to Problem~\ref{prob:3} and let
$i\in\{1,\ldots,m\}$. Then $\nabla f_i(x_n)\to\nabla f_i(x)$.
\item 
\label{p:3ii}
$(x_n)_{n\in\NN}$ converges weakly to a solution to
Problem~\ref{prob:3}. 
\item 
\label{p:3iii}
Suppose that, for some $i\in\{0,\ldots,m\}$, one of the following
holds:
\begin{enumerate}
\setlength{\itemsep}{0pt}
\item 
\label{p:3iiia}
$f_i$ is uniformly convex.
\item 
\label{p:3iiib}
The lower level sets of $f_i$ are boundedly compact.
\end{enumerate}
Then $(x_n)_{n\in\NN}$ converges strongly to a solution to
Problem~\ref{prob:3}.
\item
\label{p:3iv}
Suppose that, for some $i\in\{0,\ldots,m\}$, $f_i$ is strongly
convex, i.e., it satisfies \eqref{e:unifconv} with
$\phi=|\cdot|^2/2$. Then $(x_n)_{n\in\NN}$ converges linearly to
the unique solution to Problem~\ref{prob:3}.
\end{enumerate}
\end{proposition}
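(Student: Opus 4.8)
The plan is to recognize Problem~\ref{prob:3} as the special case of Problem~\ref{prob:2} obtained by setting $A_0=\partial f_0$ and, for every $i\in\{1,\ldots,m\}$, $A_i=\nabla f_i$, and then to invoke Proposition~\ref{p:2}. First I would check that this assignment respects the hypotheses of Problem~\ref{prob:2}: since $f_0\in\Gamma_0(\HH)$, the operator $A_0=\partial f_0$ is maximally monotone, while the Baillon--Haddad theorem \cite{Livre1} guarantees that each $\nabla f_i$ is single-valued and $\beta_i$-cocoercive, precisely because $\nabla f_i$ is $1/\beta_i$-Lipschitzian. Moreover, since $J_{\gamma\partial f_0}=\prox_{\gamma f_0}$, the recursion \eqref{e:a4} is exactly the recursion \eqref{e:a3} for this choice of operators, so that \eqref{e:a4} is an instance of \eqref{e:a3}.

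Next I would verify that the solution set of Problem~\ref{prob:3} coincides with that of the resulting instance of Problem~\ref{prob:2}, and that it is nonempty. Because the functions $(f_i)_{1\leq i\leq m}$ are finite-valued and continuous on $\HH$, the qualification condition for the subdifferential sum rule is trivially satisfied, whence $\partial(f_0+\sum_{i=1}^m\omega_i f_i)=\partial f_0+\sum_{i=1}^m\omega_i\nabla f_i$ \cite{Livre1}. Fermat's rule then identifies the minimizers in \eqref{e:prob3} with the zeros of $A_0+\sum_{i=1}^m\omega_iA_i$, i.e., with the solutions to Problem~\ref{prob:2}. Existence is supplied by \eqref{e:exi}: a lower semicontinuous convex coercive function on a Hilbert space attains its infimum \cite{Livre1}, so the instance of Problem~\ref{prob:2} has a solution and Proposition~\ref{p:2} applies.

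With these identifications in place, \ref{p:3i} and \ref{p:3ii} follow at once from Proposition~\ref{p:2}\ref{p:2i}--\ref{p:2ii} upon substituting $A_i=\nabla f_i$. For \ref{p:3iii} and \ref{p:3iv} the remaining work is to translate the convexity hypotheses on $f_i$ into the monotonicity properties demanded by Proposition~\ref{p:2}. In case \ref{p:3iiia} I would argue that uniform convexity of $f_i$ renders $A_i=\partial f_i$ uniformly monotone, hence demiregular \cite[Proposition~2.4]{Sico10}; in case \ref{p:3iiib} that boundedly compact lower level sets of $f_i$ likewise force $\partial f_i$ to be demiregular \cite[Proposition~2.4]{Sico10}; in both cases Proposition~\ref{p:2}\ref{p:2iii} delivers strong convergence. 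For \ref{p:3iv}, strong convexity of $f_i$ is equivalent to strong monotonicity of $A_i=\partial f_i$, so linear convergence follows from Proposition~\ref{p:2}\ref{p:2iv}.

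The reductions in the first two paragraphs (cocoercivity, the sum rule, Fermat's rule, existence) are standard; the only genuinely delicate step is the dictionary in \ref{p:3iii} relating the variational hypotheses on $f_i$ to demiregularity of $\partial f_i$, which is where I would lean most heavily on the characterizations collected in Example~\ref{ex:7} and \cite{Sico10}. One point to watch is that these translations must also cover the index $i=0$, where $A_0=\partial f_0$ is genuinely set-valued, so I would make sure the cited demiregularity criteria are stated for arbitrary maximally monotone subdifferentials and not merely for single-valued gradients.
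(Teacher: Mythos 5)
Your proposal is correct and follows essentially the same route as the paper: identify $A_0=\partial f_0$ and $A_i=\nabla f_i$ (Baillon--Haddad for cocoercivity), reduce Problem~\ref{prob:3} to Problem~\ref{prob:2} and \eqref{e:a4} to \eqref{e:a3}, obtain existence from coercivity, and then translate uniform/strong convexity and level-set compactness into demiregularity and strong monotonicity so as to invoke Proposition~\ref{p:2} together with Example~\ref{ex:7}. The only cosmetic difference is that you spell out the subdifferential sum rule and Fermat's rule where the paper cites a single reference for the equivalence of the two problems.
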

\begin{proof}
We derive from \cite[Theorem~20.25]{Livre1} that $A_0=\partial f_0$ 
is maximally monotone and from \cite[Corollary~18.17]{Livre1} that,
for every $i\in\{1,\ldots,m\}$, $A_i=\nabla f_i$ is
$\beta_i$-cocoercive. In this setting, it follows from 
\cite[Corollary~27.3(i)]{Livre1} that Problem~\ref{prob:2} reduces 
to Problem~\ref{prob:3}. On the other hand, since the assumptions 
imply that $f_0+\sum_{i=1}^m\omega_i f_i$ is proper, lower 
semicontinuous, convex, and coercive, it follows from 
\cite[Corollary~11.16(ii)]{Livre1} that Problem~\ref{prob:3} 
has a solution. The claims therefore follow from
Proposition~\ref{p:2}, 
Example~\ref{ex:7}\ref{ex:7iiic}\&\ref{ex:7iiie}, and
\cite[Example~22.4(iv)]{Livre1}.
\end{proof}
 
An algorithm related to \eqref{e:a4} has recently been proposed in 
\cite{Mish20} in a finite-dimensional setting; see also 
\cite{Mokh18} for a special case. 

We illustrate an application of Proposition~\ref{p:3} in the
context of a variational model that captures various formulations
found in data analysis. 

\begin{example}
\label{ex:23}
Suppose that $\HH$ is separable, let $(e_k)_{k\in\KK\subset\NN}$
be an orthonormal basis of $\HH$, and, for every $k\in\KK$, let 
$\psi_k\in\Gamma_0(\RR)$ be such that $\psi_k\geq 0=\psi_k(0)$. For
every $i\in\{1,\ldots,m\}$, let $0\neq a_i\in\HH$, let 
$\mu_i\in\RPP$, and let
$\phi_i\colon\RR\to\RP$ be a differentiable convex function such
that $\phi_i'$ is $\mu_i$-Lipschitzian. The task is to 
\begin{equation}
\label{e:prob9}
\minimize{x\in\HH}{\sum_{k\in\KK}\psi_k(\scal{x}{e_k})+\dfrac{1}{m}
\sum_{i=1}^m\phi_i(\scal{x}{a_i})}. 
\end{equation}
Let us note that \eqref{e:prob9} is an instantiation of
\eqref{e:prob3} with 
$f_0=\sum_{k\in\KK}\psi_k\circ\scal{\cdot}{e_k}$
and, for every $i\in\{1,\ldots,m\}$, 
$f_i=\phi_i\circ\scal{\cdot}{a_i}$ and
$\omega_i=1/m$. The fact that $f_0\in\Gamma_0(\HH)$ is established
in \cite{Smms05}, where it is also shown that, given
$\gamma\in\RPP$,
\begin{equation}
\label{e:prox}
\prox_{\gamma f_0}\colon x\mapsto\sum_{k\in\KK}\big(
\prox_{\gamma\psi_k}\scal{x}{e_k}\big)e_k.
\end{equation}
On the other hand, for every $i\in\{1,\ldots,m\}$,
$f_i$ is a differentiable convex function and its gradient 
\begin{equation}
\label{e:g9}
\nabla f_i\colon x\mapsto\phi_i'(\scal{x}{a_i})a_i
\end{equation}
has Lipschitz constant $\mu_i\|a_i\|^2$. Let 
$\gamma\in\left]0,2/(\max_{1\leq i\leq m}\mu_i\|a_i\|^2)\right[$ 
and let $(I_n)_{n\in\NN}$ be as in Assumption~\ref{a:12}.
In view of \eqref{e:a4}, \eqref{e:prox}, and \eqref{e:g9}, 
we can solve \eqref{e:prob9} via the algorithm
\begin{equation}
\label{e:a41}
\begin{array}{l}
\text{for}\;n=0,1,\ldots\\
\left\lfloor
\begin{array}{l}
\text{for every}\;i\in I_n\\
\left\lfloor
\begin{array}{l}
t_{i,n}=x_n-\gamma\phi_i'(\scal{x_n}{a_i})a_i\\
\end{array}
\right.\\
\text{for every}\;i\in\{1,\ldots,m\}\smallsetminus I_n\\
\left\lfloor
\begin{array}{l}
t_{i,n}=t_{i,n-1}\\
\end{array}
\right.\\[1mm]
y_n=\Sum_{i=1}^m\omega_it_{i,n}\\
x_{n+1}=\Sum_{k\in\KK}\big(
\prox_{\gamma\psi_k}\scal{y_n}{e_k}\big)e_k.
\end{array}
\right.\\
\end{array}
\end{equation}
Infinite-dimensional
instances of \eqref{e:prob9} are discussed in 
\cite{Save18,Smms05,Daub04,Demo09}.
A popular finite-dimensional setting is obtained by
choosing $\HH=\RR^N$, $\KK=\{1,\ldots,N\}$, $(e_k)_{1\leq k\leq N}$
as the canonical basis, $\alpha\in\RPP$, and, 
for every $k\in\KK$, 
$\psi_k=\alpha|\cdot|$. This reduces \eqref{e:prob9} to
\begin{equation}
\label{e:prob91}
\minimize{x\in\RR^N}{\alpha\|x\|_1+\sum_{i=1}^m
\phi_i(\scal{x}{a_i})}. 
\end{equation}
Thus, choosing for every $i\in\{1,\ldots,m\}$
$\phi_i\colon t\mapsto|t-\eta_i|^2$, 
where $\eta_i\in\RR$ models an observation, yields the Lasso 
formulation, whereas choosing 
$\phi_i\colon t\mapsto\ln(1+\exp(t))-\eta_it$, where
$\eta_i\in\{0,1\}$ models a label, yields the penalized 
logistic regression framework \cite{Hast09}. 
\end{example}

\subsection{Hard constrained inconsistent convex feasibility 
problems}

The next application revisits a model proposed in 
\cite{Siim19} to relax inconsistent feasibility problems.

\begin{problem}
\label{prob:8}
Let $m$ be a strictly positive integer and let
$(\omega_i)_{1\leq i\leq m}\in\rzeroun^m$ be such that
$\sum_{i=1}^m\omega_i=1$. Let $C_0$ be a nonempty closed convex 
subset of $\HH$ and, for every $i\in\{1,\ldots,m\}$, let $\GG_i$ be
a real Hilbert space, let
$L_i\colon\HH\to\GG_i$ be a nonzero bounded linear operator, 
let $D_i$ be a nonempty closed convex subset of $\GG_i$, let
$\mu_i\in\RPP$, and let $\phi_i\colon\RR\to\RP$ be an even
differentiable convex function that vanishes only at $0$ and
such that $\phi_i'$ is $\mu_i$-Lipschitzian.
The task is to 
\begin{equation}
\label{e:prob8}
\minimize{x\in C_0}{\sum_{i=1}^m\omega_i\phi_i
\big(d_{D_i}(L_ix)\big)}. 
\end{equation}
\end{problem}

The variational formulation \eqref{e:prob8} is a relaxation of the
convex feasibility problem 
\begin{equation}
\label{e:feas}
\text{find}\;\;x\in C_0\quad\text{such that}\quad
(\forall i\in\{1,\ldots,m\})\quad L_ix\in D_i
\end{equation}
in the sense that, if 
\eqref{e:feas} is consistent, then its solution set is precisely
that of \eqref{e:prob8}; see \cite[Section~4.4]{Siim19} for details
on this formulation and background on inconsistent convex
feasibility problems. Here $C_0$ models a hard constraint.
An early instance of \eqref{e:feas} as a 
relaxation of \eqref{e:prob8} is Legendre's method of least-squares
to deal with an inconsistent system of $m$ linear equations in
$\HH=\RR^N$ \cite{Lege05}. There, $C_0=\RR^N$ and, for
every $i\in\{1,\ldots,m\}$, $\GG_i=\RR$, $D_i=\{\beta_i\}$,
$L_i=\scal{\cdot}{a_i}$ for some $a_i\in\RR^N$ such that
$\|a_i\|=1$, $\omega_i=1/m$, and $\phi_i=|\cdot|^2$. The
formulation \eqref{e:prob8} can also be regarded as a smooth
version of the set-theoretic Fermat-Weber problem \cite{Mord12}
arising in location theory, namely,
\begin{equation}
\minimize{x\in\HH}{\dfrac{1}{m}\sum_{i=1}^md_{C_i}(x)}.
\end{equation}

The following version of the Closed Range Theorem 
will be required.

\begin{lemma}{\rm\cite[Theorem~8.18]{Deut01}}
\label{l:mi7}
Let $\GG$ be a real Hilbert space and let $L\colon\HH\to\GG$ be a
nonzero bounded linear operator. Then 
$\ran L$ is closed $\Leftrightarrow$ $\ran L^*\circ L$ is closed 
$\Leftrightarrow$ $(\exi\rho\in\RPP)(\forall x\in(\ker L)^\bot)$ 
$\|Lx\|\geq\rho\|x\|$.
\end{lemma}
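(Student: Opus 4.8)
\medskip
\noindent\textbf{Proof proposal.}
The plan is to isolate a single bounded-below principle and apply it twice. The principle is: for any nonzero bounded linear operator $T\colon\HH\to\GG$, the range $\ran T$ is closed if and only if there exists $\rho\in\RPP$ such that $\|Tx\|\geq\rho\|x\|$ for every $x\in(\ker T)^\perp$. Granting this, the bounded-below condition in the statement is exactly the principle instantiated at $T=L$, so the equivalence between closedness of $\ran L$ and that condition is immediate. Moreover, since $L^*Lx=0$ forces $\|Lx\|^2=\scal{L^*Lx}{x}=0$ and hence $Lx=0$, we have $\ker(L^*\circ L)=\ker L$, whence $(\ker(L^*\circ L))^\perp=(\ker L)^\perp$; applying the principle at $T=L^*\circ L$ (which is nonzero, since $\|L^*\circ L\|=\|L\|^2>0$) shows that $\ran(L^*\circ L)$ is closed if and only if there exists $\rho'\in\RPP$ with $\|L^*Lx\|\geq\rho'\|x\|$ for all $x\in(\ker L)^\perp$. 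It then remains only to reconcile these two bounded-below conditions.

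To prove the principle, first suppose the bounded-below inequality holds and let $y$ belong to the closure of $\ran T$, say $Tx_n\to y$. Replacing each $x_n$ by $\proj_{(\ker T)^\perp}x_n$ does not change $Tx_n$, so we may assume $x_n\in(\ker T)^\perp$; then $\|x_n-x_k\|\leq\rho^{-1}\|Tx_n-Tx_k\|$ shows that $(x_n)_{n\in\NN}$ is Cauchy in the complete subspace $(\ker T)^\perp$, hence converges to some $x$, and continuity gives $Tx=y$, so $y\in\ran T$. Conversely, if $\ran T$ is closed, then the restriction of $T$ to $(\ker T)^\perp$ is a continuous linear bijection onto the Hilbert space $\ran T$, so by the bounded inverse theorem its inverse is continuous, which is precisely the desired inequality. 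This converse is the main obstacle, as it is the only place where completeness and the open mapping theorem enter; the forward direction and the reconciliation below are elementary.

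Finally, to reconcile the two conditions on $(\ker L)^\perp$, note that Cauchy--Schwarz yields $\|Lx\|^2=\scal{L^*Lx}{x}\leq\|L^*Lx\|\,\|x\|$, so a bound $\|Lx\|\geq\rho\|x\|$ forces $\|L^*Lx\|\geq\rho^2\|x\|$; conversely $\|L^*Lx\|\leq\|L\|\,\|Lx\|$, so a bound $\|L^*Lx\|\geq\rho'\|x\|$ forces $\|Lx\|\geq(\rho'/\|L\|)\|x\|$, the denominator being positive because $L\neq 0$. Thus the two bounded-below conditions are equivalent, and chaining the equivalences above yields that closedness of $\ran L$, closedness of $\ran(L^*\circ L)$, and the stated inequality are all equivalent.
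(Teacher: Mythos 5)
Your proof is correct. Note first that the paper does not prove this lemma at all: it is imported verbatim as \cite[Theorem~8.18]{Deut01}, so there is no in-paper argument to compare against. Your self-contained proof is the standard one and every step checks out: the bounded-below principle for a general $T$ is established correctly in both directions (the projection trick $x_n\mapsto\proj_{(\ker T)^\perp}x_n$ plus completeness of the closed subspace $(\ker T)^\perp$ for sufficiency, and the bounded inverse theorem applied to the bijection $T|_{(\ker T)^\perp}\colon(\ker T)^\perp\to\ran T$ for necessity, which legitimately requires $\ran T$ to be complete, i.e.\ closed); the identity $\ker(L^*\circ L)=\ker L$ via $\|Lx\|^2=\scal{L^*Lx}{x}$ is right; and the two norm comparisons ($\|L^*Lx\|\geq\rho^2\|x\|$ from Cauchy--Schwarz, and $\|Lx\|\geq(\rho'/\|L\|)\|x\|$ from $\|L^*\|=\|L\|>0$) correctly reconcile the two bounded-below conditions. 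The only stylistic remark is that you could shorten the chain by observing that the principle applied to $T=L^*\circ L$ together with the reconciliation immediately closes the three-way equivalence, which is exactly what you do; nothing is missing.
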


\begin{corollary}
\label{c:8}
Consider the setting of Problem~\ref{prob:8} under one of the
following assumptions:
\begin{enumerate}[label={\rm[\alph*]}]
\setlength{\itemsep}{0pt}
\item
\label{c:8a}
There exists $j\in\{1,\ldots,m\}$ such that 
$\lim_{\|x\|\to\pinf}\big(\iota_{C_0}(x)+\phi_j
\big(d_{D_j}(L_jx)\big)\big)=\pinf$.
\item
\label{c:8b}
There exists $j\in\{1,\ldots,m\}$ such that
$\ran L_j$ is closed, $C_0\subset(\ker L_j)^\bot$, and $D_j$ is
bounded.
\item
\label{c:8c}
There exists $j\in\{1,\ldots,m\}$ such that
$\GG_j=\HH$, $L_j=\Id$, and $D_j$ is bounded.
\item
\label{c:8d}
$C_0$ is bounded. 
\end{enumerate}
Set $\beta=1/(\max_{1\leq i\leq m}\mu_i\|L_i\|^2)$, let 
$\gamma\in\left]0,2\beta\right[$, let
$(I_n)_{n\in\NN}$ be as in Assumption~\ref{a:12},
let $x_0\in C_0$, 
let $(t_{i,-1})_{1\leq i\leq m}\in\HH^m$, and iterate
\begin{equation}
\label{e:a8}
\begin{array}{l}
\text{for}\;n=0,1,\ldots\\
\left\lfloor
\begin{array}{l}
\text{for every}\;i\in I_n\\
\left\lfloor
\begin{array}{l}
\text{if}\;\;L_ix_n\notin D_i\\
\left\lfloor
\begin{array}{l}
t_{i,n}=x_n-\gamma\dfrac{\phi_i'\big(d_{D_i}(L_ix_n)\big)}
{d_{D_i}(L_ix_n)}L_i^*\big(L_ix_n-\proj_{D_i}(L_ix_n)\big)\\
\end{array}
\right.\\
\text{else}\\
\left\lfloor
\begin{array}{l}
t_{i,n}=x_n
\end{array}
\right.
\end{array}
\right.\\[5mm]
\text{for every}\;i\in\{1,\ldots,m\}\smallsetminus I_n\\
\left\lfloor
\begin{array}{l}
t_{i,n}=t_{i,n-1}\\
\end{array}
\right.\\[1mm]
x_{n+1}=\proj_{C_0}\Bigg(\Sum_{i=1}^m\omega_it_{i,n}\Bigg).
\end{array}
\right.\\
\end{array}
\end{equation}
Then the following hold:
\begin{enumerate}
\item 
\label{c:8i}
$(x_n)_{n\in\NN}$ converges weakly to a solution to
Problem~\ref{prob:8}.
\item 
\label{c:8ii}
Suppose that one of the following holds:
\begin{enumerate}[label={\rm[\alph*]}]
\setlength{\itemsep}{0pt}
\setcounter{enumii}{4}
\item
\label{c:8e}
Condition~\ref{c:8b} is satisfied with the additional assumptions
that $\phi_j=\mu_j|\cdot|^2/2$ and $D_j$ is compact.
\item
\label{c:8f}
$C_0$ is boundedly compact. 
\end{enumerate}
Then $(x_n)_{n\in\NN}$ converges strongly to a solution to
Problem~\ref{prob:8}.
\end{enumerate}
\end{corollary}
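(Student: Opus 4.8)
The plan is to recognize \eqref{e:prob8} as an instance of the minimization Problem~\ref{prob:3} and to invoke Proposition~\ref{p:3}. First I would set $f_0=\iota_{C_0}$, which lies in $\Gamma_0(\HH)$ since $C_0$ is nonempty closed and convex, and, for every $i\in\{1,\ldots,m\}$, $f_i=\phi_i\circ d_{D_i}\circ L_i$. Since each $\phi_i$ is even, convex, and differentiable with $\mu_i$-Lipschitzian derivative, one has $\phi_i'(0)=0$, and it is known \cite{Siim19} that $\phi_i\circ d_{D_i}$ is then a differentiable convex function whose gradient is $\mu_i$-Lipschitzian and equals $\phi_i'(d_{D_i}(y))(y-\proj_{D_i}y)/d_{D_i}(y)$ when $y\notin D_i$ and $0$ otherwise. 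Hence $f_i\colon\HH\to\RR$ is differentiable and convex, its gradient $\nabla f_i=L_i^*\circ\nabla(\phi_i\circ d_{D_i})\circ L_i$ is $\mu_i\|L_i\|^2$-Lipschitzian, so that $f_i$ has a $1/\beta_i$-Lipschitzian gradient with $\beta_i=1/(\mu_i\|L_i\|^2)$ and $\beta=\min_{1\le i\le m}\beta_i$. With $\prox_{\gamma f_0}=\proj_{C_0}$ and the above formula for $\nabla f_i$, iteration \eqref{e:a8} is exactly the errorless instance of \eqref{e:a4}, and the stipulation $\gamma\in\left]0,2\beta\right[$ is that of Proposition~\ref{p:3}.

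The next step is to verify the coercivity hypothesis \eqref{e:exi} under each of \ref{c:8a}--\ref{c:8d}, after which Proposition~\ref{p:3}\ref{p:3ii} yields \ref{c:8i}. All summands are nonnegative, so it suffices to render one of them coercive. Under \ref{c:8a} this is immediate from $\omega_j>0$. Under \ref{c:8b}, Lemma~\ref{l:mi7} furnishes $\rho\in\RPP$ with $\|L_jx\|\ge\rho\|x\|$ for every $x\in(\ker L_j)^\bot$; since $C_0\subset(\ker L_j)^\bot$ and $D_j$ is bounded, $d_{D_j}(L_jx)\ge\rho\|x\|-\sup_{d\in D_j}\|d\|\to\pinf$ as $\|x\|\to\pinf$ in $C_0$, while $\phi_j$, being convex, even, and nonconstant (hence unbounded and thus coercive), forces $\phi_j(d_{D_j}(L_jx))\to\pinf$. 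Condition \ref{c:8c} is the special case $L_j=\Id$ of \ref{c:8b}, and under \ref{c:8d} the function $\iota_{C_0}$ is itself coercive because $C_0$ is bounded.

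For the strong convergence statement \ref{c:8ii}, under \ref{c:8f} the lower level sets of $f_0=\iota_{C_0}$ are either empty or equal to $C_0$, hence boundedly compact, so Proposition~\ref{p:3}\ref{p:3iiib} applies with $i=0$. The delicate case is \ref{c:8e}, where $f_j=\mu_j d_{D_j}(L_j\cdot)^2/2$ is neither uniformly convex nor has boundedly compact lower level sets on $\HH$ (it is constant along $\ker L_j$), so Proposition~\ref{p:3}\ref{p:3iii} cannot be applied to $f_j$ directly; this is the main obstacle. I would instead exhibit a strong sequential cluster point of $(x_n)_{n\in\NN}$ by hand and then conclude with the strong-convergence mechanism of Theorem~\ref{t:1}\ref{t:1ivb}. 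Here $\nabla f_j(x)=\mu_jL_j^*(L_jx-\proj_{D_j}(L_jx))$, and Proposition~\ref{p:3}\ref{p:3i} gives $\nabla f_j(x_n)\to\nabla f_j(x)$ for any solution $x$. Since the iterates satisfy $x_n\in C_0\subset(\ker L_j)^\bot$ and $D_j$ is compact, along a subsequence $\proj_{D_j}(L_jx_n)$ converges strongly, whence $L_j^*L_jx_n=\mu_j^{-1}\nabla f_j(x_n)+L_j^*\proj_{D_j}(L_jx_n)$ converges strongly as well. Because $\ran L_j$ is closed, Lemma~\ref{l:mi7} makes $L_j^*L_j$ bounded below on $(\ker L_j)^\bot$, hence boundedly invertible there, so $(x_n)_{n\in\NN}$ has a strong sequential cluster point; Theorem~\ref{t:1}\ref{t:1ivb} then upgrades the weak convergence of \ref{c:8i} to strong convergence.
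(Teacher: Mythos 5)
Your proposal is correct and follows essentially the same route as the paper: the same reduction to Problem~\ref{prob:3} via $f_0=\iota_{C_0}$ and $f_i=\phi_i\circ d_{D_i}\circ L_i$, the same coercivity verifications for \ref{c:8a}--\ref{c:8d}, the appeal to Proposition~\ref{p:3}\ref{p:3iiib} for \ref{c:8f}, and the same combination of Proposition~\ref{p:3}\ref{p:3i}, compactness of $D_j$, and the lower bound from Lemma~\ref{l:mi7} for \ref{c:8e}. The only cosmetic deviation is at the very end of case \ref{c:8e}: the paper shows that the whole sequence $\big(L_j^*(\proj_{D_j}(L_jx_n))\big)_{n\in\NN}$ converges strongly (unique cluster point inside the compact set $L_j^*(D_j)$) and deduces $x_n\to x$ directly from the coercivity of $L_j^*\circ L_j$ on $(\ker L_j)^\bot$, whereas you extract a strongly convergent subsequence and then re-invoke Theorem~\ref{t:1}\ref{t:1ivb}; both arguments are valid.
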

\begin{proof}
We first note that \eqref{e:prob8} is an instance of 
\eqref{e:prob3} with $f_0=\iota_{C_0}$ and 
$(\forall i\in\{1,\ldots,m\})$ $f_i=\phi_i\circ d_{D_i}\circ L_i$. 
Next, we derive from \cite[Example~2.7]{Livre1} that, for every 
$i\in\{1,\ldots,m\}$, $f_i$ is convex and differentiable, and 
that its gradient
\begin{equation}
\label{e:kj80}
\nabla f_i\colon\HH\to\HH\colon x\mapsto
\begin{cases}
\dfrac{\phi_i'\big(d_{D_i}(L_ix)\big)}{d_{D_i}(L_ix)}
L_i^*\big(L_ix-\proj_{D_i}(L_ix)\big),
&\text{if}\;\;L_ix\notin {D_i};\\
0,&\text{if}\;\;L_ix\in {D_i}
\end{cases}
\end{equation}
has Lipschitz constant $\mu_i\|L_i\|^2$. Hence \eqref{e:a8} is an
instance of \eqref{e:a4}. Now, in order to apply
Proposition~\ref{p:3}, let us check that \eqref{e:exi} is satisfied 
under one of assumptions \ref{c:8a}--\ref{c:8d}.

\ref{c:8a}: We have 
$f_0(x)+\sum_{i=1}^m\omega_i f_i(x)\geq
\omega_j(\iota_{C_0}(x)+ f_j(x))\to\pinf$ as
$\|x\|\to\pinf$.

\ref{c:8b}$\Rightarrow$\ref{c:8a}: In view of \ref{c:8d}, we
assume that $C_0$ is unbounded. It follows from Lemma~\ref{l:mi7}
that there exists $\rho\in\RPP$ such that 
$(\forall x\in(\ker L_j)^\bot)$ $\|L_jx\|\geq\rho\|x\|$.
Hence,
\begin{equation}
\label{e:mi1}
(\forall x\in C_0)\quad\|L_jx\|\geq\rho\|x\|.
\end{equation}
Now let $z\in\GG_j$. Then, since $D_j$ is bounded,
$\delta=\diam(D_j)+\|\proj_{D_j}z\|<\pinf$ and 
\begin{equation}
(\forall y\in\GG_j)\quad\|y\|
\leq\|y-\proj_{D_j}y\|+\|\proj_{D_j}y-\proj_{D_j}z\|
+\|\proj_{D_j}z\|\leq d_{D_j}(y)+\delta.
\end{equation}
Consequently, $d_{D_j}(y)\to\pinf$ as $\|y\|\to\pinf$ with
$y\in\GG_j$. Thus, since $\phi_j$ is coercive by
\cite[Proposition~16.23]{Livre1}, we obtain
\begin{equation}
\label{e:mi4}
\phi_j\big(d_{D_j}(y)\big)\to\pinf\quad\text{as}\quad
\|y\|\to\pinf\quad\text{with}\quad y\in\GG_j.
\end{equation}
We deduce from \eqref{e:mi1} and \eqref{e:mi4} that
\begin{equation}
\label{e:mi5}
f_j(x)\to\pinf\quad\text{as}\quad\|x\|\to\pinf\quad
\text{with}\quad x\in C_0.
\end{equation}

\ref{c:8c}$\Rightarrow$\ref{c:8b} and 
\ref{c:8d}$\Rightarrow$\ref{c:8a}: Clear.\\
\noindent
We are now ready to use Proposition~\ref{p:3} to prove the
assertions. 

\ref{c:8i}: Apply Proposition~\ref{p:3}\ref{p:3ii}.

\ref{c:8ii}\ref{c:8e}: Let $x$ be the weak limit in \ref{c:8i} and
set $u_j=\nabla f_j(x)/\mu_j$. Then
Proposition~\ref{p:3}\ref{p:3i} asserts that 
\begin{equation}
\label{e:mi2}
L_j^*\big(L_jx_n-\proj_{D_j}(L_jx_n)\big)\to u_j.
\end{equation}
We also observe that, since $L_j^*\circ L_j$ is weakly continuous
\cite[Lemma~2.41]{Livre1}, we have $L^*_j(L_jx_n)\weakly
L^*_j(L_jx)$. Therefore, \eqref{e:mi2} yields 
\begin{equation}
\label{e:mi3}
L_j^*\big(\proj_{D_j}(L_jx_n)\big)\weakly L^*_j(L_jx)-u_j.
\end{equation}
However, the set $L_j^*(D_j)$ is compact by
\cite[Lemma~1.20]{Livre1} and it contains
$(L_j^*(\proj_{D_j}(L_jx_n)))_{n\in\NN}$. This sequence has 
therefore $L^*_j(L_jx)-u_j$ as its unique strong sequential cluster
point. Thus, $L_j^*(\proj_{D_j}(L_jx_n))\to L^*_j(L_jx)-u_j$ and 
we deduce from \eqref{e:mi2} that 
\begin{equation}
\label{e:mi6}
L^*_j(L_jx_n)\to L^*_j(L_jx).
\end{equation}
On the other hand, for every $n\in\NN$, since $x$ and $x_n$ lie in
$C_0\subset(\ker L_j)^\bot$, we have 
$x_n-x\in(\ker L_j)^\bot=(\ker L_j^*\circ L_j)^\bot$.
Hence, we deduce from \eqref{e:mi6} and Lemma~\ref{l:mi7} that
there exists $\theta\in\RPP$ such that 
\begin{equation}
\label{e:mi9}
\theta\|x_n-x\|\leq\|(L^*_j\circ L_j)(x_n-x)\|\to 0.
\end{equation}
We conclude that $x_n\to x$. 

\ref{c:8ii}\ref{c:8f}: This follows from
Proposition~\ref{p:3}\ref{p:3iiib} since the lower level 
sets of $f_0$ are the compact sets $\{\emp,C_0\}$.
\end{proof}

We conclude by revisiting \eqref{e:cheney59} and recovering a 
classical result on the method of alternating projections.

\begin{example}{\rm\cite[Theorem~4(a)]{Che59a}}
\label{ex:8}
Let $C$ and $D$ be nonempty closed convex subsets of $\HH$ such
that $D$ is compact. Let $x_0\in\HH$ and set $(\forall n\in\NN)$
$x_{n+1}=\proj_{C}(\proj_{D}x_n)$. Then $(x_n)_{n\in\NN}$
converges strongly to a point in $x\in C$ such that 
$x=\proj_C(\proj_Dx)$. 
\end{example}
\begin{proof}
Apply Corollary~\ref{c:8}\ref{c:8ii}\ref{c:8e} with $m=1$, 
$C_0=C$, $\GG_1=\HH$, $L_1=\Id$, $D_1=D$, $\gamma=1$, and
$\mu_1=1$.
\end{proof}

\end{document}